\title{Sum the Probabilities to $m$ and Stop}
\author{Zakaria Derbazi  \\{\it\small Queen Mary University of London}}
\begin{document}
	\maketitle
	\begin{abstract}
		This work investigates the optimal selection of the $m$th  last success in a sequence of $n$ independent Bernoulli trials. We propose a threshold strategy that is $\varepsilon$-optimal under minimal assumptions about the monotonicity of the trials' success probabilities. This new strategy ensures stopping at most one step earlier than the optimal rule. Specifically, the new threshold coincides with the point where the sum of success probabilities in the remaining trials equals $m$. 
		We show that the underperformance of the new rule, in comparison to the optimal one,  is of the order $O(n^{-2})$ in the case of a Karamata-Stirling success profile with parameter $\theta > 0$  where $p_k = \theta / (\theta + k - 1)$ for the $k$th trial. We further leverage the classical weak convergence of the number of successes in the trials to a Poisson random variable to derive the asymptotic solution  of the stopping problem. Finally, we present illustrative examples highlighting the close performance between the two rules.
	\end{abstract}
	
	\section{Introduction}
	The last success problem generalises the best choice problem to success probabilities beyond the classical random record model, where the success probability on the $k$th trial is $p_k=k^{-1}$. We refer to success probabilities $\profile\coloneqq (p_1, p_2, \ldots)$ as the \textit{success profile} of the trials (or the problem).
	
	\citeay{Pfeifer} introduced one of the earliest last success problems, where the success profile is based on Nevzorov's $F^\alpha$ record model. In this model, the success probability at trial $k$ is given by $p_k =\alpha_k / (\alpha_1 + \cdots + \alpha_k)$, where the $\alpha$'s are positive numbers known as the \textit{exponents} of the $F^\alpha$ scheme.  Later, \citeay{HillKrengel} formalised the last success problems for arbitrary success profiles, and \citeay{Odds} established the remarkable odds theorem. This result provides a simple algorithm to compute optimal policies that achieve an asymptotic winning probability of $e^{-1}$. The generalisation of the odds theorem to selection problems involving the $m$th  last success began with \citeay{BrussPaindaveine} and \citeay{TamakiOdds}.  Although Pfeifer briefly derived the asymptotic value for the `selecting any of the last $m$ successes' problem, detailed exploration was not provided. The former two studies demonstrated that the optimal policy is of the threshold type. In the case of the $m$th last  success, the threshold is the index where the \textit{multiplicative sum of the odds} of order $m - 1$ exceeds the multiplicative sum of the odds of order $m$. This multiplicative sum of the odds of order $m$ is a `higher order' sum of the odds corresponding to an elementary symmetric polynomial in $m$ different odds. \citeay{MatsuiAnoBound} exploited the properties of these polynomials to derive a lower bound for the winning probability. Subsequently, in 2017, they presented a unified approach for identifying any success within a contiguous range of last successes in independent Bernoulli trials. This framework generalises the $m$th last-success problem and the `any of the last $m$ successes' problem of Pfeifer and Tamaki. However, establishing the unimodality of the winning probability required tedious algebraic manipulation involving elementary symmetric polynomials.
	
	\citeay{D3} demonstrated that the unimodality of the winning probability for the general problem of Matsui and Ano can be derived using a probabilistic argument anchored by two principal insights. The first insight relates to the characteristics of the Poisson-binomial distribution, which represents the number of successes in inhomogeneous Bernoulli trials. The second insight involves the \textit{total positivity} of the Markov chain embedded in the success states of the trials (see \citeay{KarlinTPBook}). This property ensures the unimodality of the probability of choosing a success among the $\ell$-th to $m$th last successes, where $1 \le \ell \le m < n$, in $n$ independent Bernoulli trials.

	Based on these insights, we propose a new threshold-based stopping strategy that achieves asymptotic optimality by refining the results of \citeay{Samuels1965}. Samuels, in turn, improved  \citeay{Darroch}'s rule on the mode. This rule states that the mode of the Poisson-binomial distribution deviates from the mean by at most one. The threshold of the new rule stops at the first success following a critical index that satisfies two conditions:
	
	\begin{enumerate}
		\item The success probabilities of the trial at this threshold and the immediately subsequent trial are the largest and the second largest,  respectively, among the remaining trials and
		\item The sum of success probabilities in the remaining trials is at least $m$.
	\end{enumerate}
	In actuality, the first condition is stronger than what is required by our result. Nonetheless, it illustrates that the monotonicity of the success probabilities was not relied upon to derive the new strategy.
	
	This new strategy's inherent simplicity translates to significant advantages in terms of  ease of implementation and computational efficiency compared to the original solution based on elementary symmetric polynomials. Furthermore, we rely on the weak convergence of the number of successes to a Poisson random variable to derive the asymptotic winning probability and the asymptotic threshold. This approach offers a clear advantage over the analytical method, which necessitates approximating an iterated Riemann sum using a multiple integral.
	
	
	The paper is organised as follows: Section 2 begins with an overview of the problem of selecting the $m$th last success in a sequence of $n$ independent Bernoulli trials. An analytical solution to this problem is then presented. In Section 3, the work of \citeay{Samuels1965} is revisited and a refined interpretation of his findings is provided. This refined interpretation allows us to identify the mode of the probability function and introduce the mean rule, a stopping rule that is asymptotically optimal and based on the sum of probabilities. Moving on to Section 4, we employ the classical Poisson approximation of the Poisson-binomial distribution to derive an asymptotic threshold rule. This rule is identical to the one obtained through the analytical approach. Finally, Section 5 presents a series of examples to compare the thresholds and winning probabilities of the two rules.
	
	\section{The Analytical method}
	Let $X_1, X_2, \ldots$ be a sequence of independent Bernoulli trials defined on a complete probability space $(\Omega, {\mathcal B}, \proba)$ and let $ {\cal F}_k \coloneqq \sigma(X_1, \ldots, X_k)$ denote the sigma-algebra generated by the first $k$ trials. Define ${\mathcal T}$ to be the class of all nonanticipating stopping rules  such that $\tau \in {\cal T}$ means that $\{ \tau = k\} \in \mf_k$ and $X_\tau = 1$. This class also includes the rules that never stop.
	We examine the  optimal selection of the $m$th last success in $n$ Bernoulli trials. This problem was proposed and solved by Bruss and Paindaveine analytically, and it can be described by the following stopping time
	\begin{alignat}{2}\label{stopping.time}
		\tau_m &= \argmax_{\tau \in {\cal T}} ~\proba\left[\sum_{j=\tau}^n X_j= m\right].
	\end{alignat}
	We introduce some necessary notations to lay the groundwork for treating this problem. Let $\profile\coloneqq(p_1,p_2,\ldots, p_n)$ be a general profile of success probabilities associated with trials $X_1, X_2, \ldots, X_n$ where $p_k=\proba(X_k=1)$. Without loss of generality, assume that $0 < p_k < 1$ for all $k\ge1$. 
	
	The number of successes among trials $k, k+1,\cdots,n$ is a random variable
	with a Poisson-binomial distribution. It has the following probability generating function
	$$z \mapsto \prod_{j=k}^n (1-p_j+zp_j).$$
	
	A straightforward computation gives the probability of zero successes and the probability of $m$ successes from stage $k$ onwards as
	\begin{alignat}{2}
		s_0(k, n)&\coloneqq\prod_{j=k}^n (1-p_j),~~~~k<n \label{s0}\\
		s_m(k, n)&\coloneqq s_0(k,n)R_m(k, n),  ~~~~~k\le n-m+1  \label{s0sm},
	\end{alignat}
	where $R_m(k,n)$ is an $m$th elementary symmetric polynomial in variables $(r_k, \ldots, r_n)$ and $r_{k}\coloneqq {p_k}/{(1-p_k)}$ denotes the odds ratio for trial $k$.   The polynomial   $R_m(k,n)$ is given by 
	\begin{equation}\label{Rjk}
		R_{m}(k, n)\coloneqq\sum_{k \le i_1 < \ldots <i_m \le n} r_{i_1}\cdots r_{i_m}, ~~~R_{0}(k,n)=1,
	\end{equation}
	where $R_{m}(k, n) =0$ and $s_m(k,n)=0$ for all  $k > n-m + 1$.
	Based on definition (\ref{Rjk}), It is not difficult to find that 
	\begin{alignat}{2} 
		R_m(k,n) &=  R_{m}(k+1,n) +r_{k}R_{m-1}(k+1, n)\label{Rj(k+1)}.
	\end{alignat}

	Tamaki \cite{TamakiOdds} referred to $R_m(k,n)$ as  the `sum of the multiplicative odds', which also appears in \cite{BrussPaindaveine} for general $m \ge 1$ and in \cite{Odds} as the `sum of the odds' $R_1(k,n)=r_k + \cdots + r_n$. A connection between the three problems and  elementary symmetric  polynomials based on $R_m(k,n)$ is detailed in \cite{MatsuiAno}.
	
	It is not difficult to recognise that at any given trial $k$, the event of exactly $m$ successes corresponds to  1) observing a success at the current trial and further $m-1$ successes in the subsequent trials or 2) observing a failure at the current trial and exactly $m$ successes in the remaining trials. This relationship is captured by the  recursion
	\begin{equation*}
		s_m(k,n)=p_ks_{m-1}(k+1,n) + (1-p_k)s_m({k+1},n).
	\end{equation*}
	By symmetry, obtaining $m$ successes in $n$ trials entails observing a success on the last trial and $m-1$ successes in the preceding ones or a failure on the  $n$th trial and  observing exactly $m$ successes in the first $n-1$ trials. The essence of this description is conveyed through the subsequent identity
	\begin{equation*}
		s_m(k,n)=p_ns_{m-1}(k,n-1) + (1-p_n)s_m({k},n-1).
	\end{equation*}
	
	Let $\Delta_k, \Delta_m$  be {\it backward difference}  operators applied respectively to the first parameter $k$ and the index $m$. The previous recursion leads us to consider increments  in success probabilities between two consecutive stages, which we define as
	\begin{alignat}{2}
		\Delta_k s_m(k, n) &\coloneqq s_m(k, n) - s_m(k-1, n)\nonumberj
		&= p_{k-1}s_0(k,n)\left[R_{m}(k, n)- R_{m-1}(k, n)\right]	\nonumberj	
		&=p_{k-1} \Delta_m s_m(k,n).  \label{operator.identity}
	\end{alignat}

	We now examine the details of the solution to the stopping problem. Recall from definition  (\ref{stopping.time}) that our objective is to maximise the \textit{probability function} $s_m(\cdot, n)$. Lemma 11 in \cite{D3} establishes that a myopic strategy is optimal for last success problems with a unimodal reward function. We leverage this concept to demonstrate the optimality of the threshold rule, which aligns with the myopic strategy in this case.

	Next, we present the following result of \cite{BrussPaindaveine}, which defines the optimal threshold rule for the $m$th  last-success problem.
	\begin{thm}[\citeay{BrussPaindaveine}]\label{thm:unimodality.lastm}
		For the problem of  selecting  the  $m$th  last success in a sequence of $n$ independent Bernoulli trials, an optimal rule exists, and the  policy is to stop at the first success, if any, on or after index $k_m$, where
		\begin{alignat*}{2}
			k_m \coloneqq \min\{1 \le i \le n-m+1: R_m(i, n) \le  R_{m-1}(i, n)\} \vee 1.
		\end{alignat*}
		and  the winning probability is given by $s_m(k_m, n)$.	
	\end{thm}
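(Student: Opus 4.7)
The plan is to combine the backward-difference identity (\ref{operator.identity}), the unimodality of $s_m(\cdot, n)$ established in \cite{D3}, and Lemma 11 of \cite{D3}, which guarantees optimality of the myopic strategy for last-success problems with unimodal reward functions.

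First, I would verify that the winning probability of the threshold rule ``stop at the first success on or after index $i$'' equals $s_m(i, n)$. Decomposing the event of exactly $m$ successes in trials $i, i+1, \ldots, n$ according to the location $k^*$ of the first success, and using independence of the trials, yields
\begin{equation*}
s_m(i, n) = \sum_{k^* = i}^{n-m+1} \left(\prod_{j=i}^{k^*-1}(1-p_j)\right) p_{k^*}\, s_{m-1}(k^*+1, n),
\end{equation*}
which is exactly that winning probability. Maximising over threshold rules therefore reduces to maximising the map $i \mapsto s_m(i, n)$ on $\{1, \ldots, n-m+1\}$.

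Next, using identity (\ref{operator.identity}), the sign of $\Delta_k s_m(i, n) = s_m(i, n) - s_m(i-1, n)$ coincides with that of $R_m(i, n) - R_{m-1}(i, n)$, since $p_{i-1}$ and $s_0(i, n)$ are strictly positive. Invoking the unimodality of $s_m(\cdot, n)$ (shown probabilistically in \cite{D3} via total positivity of the Markov chain embedded on the success states), the function $i \mapsto s_m(i, n)$ switches monotonicity exactly once, and the switch occurs at the first index at which $R_m - R_{m-1}$ ceases to be strictly positive. This transition point is precisely $k_m$; the $\vee 1$ handles the boundary case where $R_m(1, n) \le R_{m-1}(1, n)$ already holds.

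Finally, to lift optimality among threshold rules to optimality within the full class $\mathcal{T}$, I would invoke Lemma 11 of \cite{D3}: the myopic strategy is optimal for last-success problems with a unimodal reward function. In the present setting the myopic rule coincides with the threshold rule at $k_m$, so the policy ``stop at the first success on or after $k_m$'' is globally optimal with winning probability $s_m(k_m, n)$. The principal obstacle is justifying unimodality of $s_m(\cdot, n)$; without it, the first sign change of $R_m - R_{m-1}$ need not identify the global maximiser and the argument collapses. The probabilistic proof via total positivity in \cite{D3} is the crucial enabling tool, and once it is in place the remainder is bookkeeping with the identities supplied above.
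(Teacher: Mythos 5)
Your architecture is the one the paper itself gestures at: the paper does not reprove this theorem (it imports it from Bruss and Paindaveine), and Section~3 sketches precisely your route---reduce to threshold rules via ``the winning probability of the threshold at $i$ is $s_m(i,n)$'', transfer signs through (\ref{operator.identity}), get unimodality of $s_m(\cdot,n)$ from the log-concavity/total-positivity facts in \cite{D3}, and close with Lemma~11 of \cite{D3} on myopic optimality. The decomposition over the location of the first success and the sign identification are both correct. The genuine gap is the step ``the switch occurs at the first index at which $R_m-R_{m-1}$ ceases to be strictly positive; this transition point is precisely $k_m$.'' By the very identity you invoke, $s_m(i,n)-s_m(i-1,n)$ has the sign of $R_m(i,n)-R_{m-1}(i,n)$; hence at the first index $i=k_m$ with $R_m(k_m,n)\le R_{m-1}(k_m,n)$ one has $s_m(k_m,n)\le s_m(k_m-1,n)$, so under unimodality the maximiser of $s_m(\cdot,n)$ is the \emph{last} index with $R_m(i,n)\ge R_{m-1}(i,n)$, which is generically $k_m-1$, not $k_m$. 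Carried out consistently, your argument establishes optimality of the threshold $k_m-1$ with value $s_m(k_m-1,n)\ge s_m(k_m,n)$; it does not deliver the displayed statement as written. The correct reading re-indexes the comparison to the trials strictly after the current stage: stop at the first success at a stage $i$ with $R_m(i+1,n)\le R_{m-1}(i+1,n)$, equivalently take the largest $i$ with $R_m(i,n)\ge R_{m-1}(i,n)$. That the one-unit shift is not cosmetic is visible in the paper's own numerics: for $p_k=1/k$ and $n=100$ ($m=1$, $\theta=1$) the tabulated optimal threshold is $38$, whereas $\min\{i:R_1(i,n)\le 1\}=39$. So either rework this step (doing the myopic comparison at stage $i$ with the polynomials evaluated at $i+1$) or state explicitly that $k_m$ in the theorem must be read in that shifted convention.

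A smaller point: the claim ``the myopic rule coincides with the threshold rule'' is exactly what Lemma~11 of \cite{D3} plus unimodality is supposed to buy, but you assert it rather than argue it. You should record the one-stage comparison---stopping on a success at stage $i$ wins with probability $s_{m-1}(i+1,n)$, while any rule that continues wins with probability at most $\max_{j\ge i+1}s_m(j,n)$---and note that unimodality (together with the equivalence between $s_{m-1}(i+1,n)\ge s_m(i+1,n)$ and $R_{m-1}(i+1,n)\ge R_m(i+1,n)$) makes the set of stages at which stopping is weakly better a terminal interval, so the myopic and threshold rules agree. With that spelled out and the indexing corrected, the proof is complete and matches the paper's intended probabilistic shortcut.
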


	Next, we derive the analytical solution when the sequence of success probabilities $p_1, \ldots, p_n$ follows the Karamata-Stirling profile given by
	\begin{equation}\label{profile}
		p_k = \frac{\theta}{\theta+k-1},~~~~ k=1,2,\ldots
	\end{equation}
	where $\theta$ is a positive real number.

	The distribution of the number of successes among trials $k + 1, \ldots, n$ has a nice probability generating function
	\begin{alignat}{2}\label{PGF}
		z \mapsto  \prod_{i=k+1}^{n} \bigg(1-\frac{\theta}{\theta + i-1}+\frac{\theta}{\theta + i-1}z\bigg)= \frac{(k+\theta z)_{n-k}}{(k+\theta)_{n-k}}, 
	\end{alignat}
	where $(x)_n$ denotes the Pochhammer symbol.  Using  the success profile  (\ref{profile}) in expressions (\ref{s0}) and (\ref{s0sm}), or straight from (\ref{PGF}), we obtain the probabilities of zero successes and $m$ successes in trials $k+1, \ldots, n$, respectively
	\begin{alignat}{2}
		s_0(k+1, n)&= \frac{(k)_{n-k}}{(k + \theta)_{n-k}},\label{s0.karamata}\\
		s_m(k+1, n)&\coloneqq s_0(k+1,n) R_m(k+1, n; \theta).\nonumber
	\end{alignat}
	As before,  $R_m$ is the sum of the multiplicative odds with an additional parameter $\theta$ given by
	\begin{equation*}
		R_{m}(k, n; \theta)\coloneqq\sum_{k \le j_1 < \ldots <j_m \le n}  \frac{\theta^m}{(j_1-1)\cdots(j_m-1)},
	\end{equation*}
	with the convention  $R_0(k, n; \theta) = 1$ and $R_m(k, n; \theta) = 0$ for $k > n-m+1$.
	
	In the next theorem, we derive the optimal proportions and  values when the trials have a Karamata-Stirling profile. These results  extend  section 6  of Bruss and Paindaveine \cite{BrussPaindaveine} to this new success profile, aligning with Tamaki's approach in \cite{TamakiOdds}.
	
	\begin{thm}
		Consider $n$ Bernoulli trials $X_1, \ldots, X_n$, where $p_k=\proba(X_k=1)$ is given by (\ref{profile}). As $n \rightarrow \infty$, the optimal proportion of trials to skip, $\kappa_m$, and the asymptotic optimal winning probability, $V_m$, associated with the problem of selecting  the  $m$th last success are given by
		\begin{alignat*}{2}
			\mathrm{\rm(i)}~~\kappa_m &= \exp\left(-m / \theta\right),\\
			\mathrm{\rm(ii)}~~V_m&= \frac{m^m}{m!}\exp\left(-m\right).
		\end{alignat*}
		
	\end{thm}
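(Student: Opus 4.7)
The plan is to invoke Theorem~\ref{thm:unimodality.lastm} and analyse the asymptotic behaviour of the threshold equation $R_m(i, n; \theta) = R_{m-1}(i, n; \theta)$ under the Karamata-Stirling profile, for which the odds simplify to $r_j = \theta/(j-1)$. The central ingredient is the Riemann-sum approximation
$$R_m(k, n; \theta) = \theta^m \sum_{k \le j_1 < \cdots < j_m \le n} \prod_{i=1}^m \frac{1}{j_i - 1} \;\longrightarrow\; \frac{\theta^m}{m!}(-\log \kappa)^m,$$
valid as $n \to \infty$ with $k/n \to \kappa \in (0,1)$. The cleanest route is via the first-order sum: $R_1(k, n; \theta) = \theta \sum_{j=k}^n (j-1)^{-1}$ is a Riemann sum for $\theta \int_\kappa^1 x^{-1}\,dx = -\theta \log \kappa$, and since $\max_j r_j = O(1/n)$ on the range $[k, n]$, Newton's identities (or a direct bound on the contribution of near-diagonal tuples) yield $R_m \sim R_1^m/m!$.

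With this asymptotic in hand, I would substitute $k_m \sim \kappa_m n$ into the equation $R_m(k_m, n; \theta) = R_{m-1}(k_m, n; \theta)$ supplied by Theorem~\ref{thm:unimodality.lastm}. In the limit this becomes
$$\frac{\theta^m}{m!}(-\log \kappa_m)^m = \frac{\theta^{m-1}}{(m-1)!}(-\log \kappa_m)^{m-1},$$
which simplifies to $-\log \kappa_m = m/\theta$, yielding $\kappa_m = e^{-m/\theta}$ and proving part (i).

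For part (ii), the winning probability is $V_m = \lim_{n \to \infty} s_0(k_m, n)\, R_m(k_m, n; \theta)$. Using (\ref{s0.karamata}) and rewriting the Pochhammer symbols via Gamma functions, $s_0(k+1, n) = \Gamma(n)\Gamma(k+\theta)/(\Gamma(k)\Gamma(n+\theta))$, together with the standard asymptotic $\Gamma(x+\theta)/\Gamma(x) \sim x^\theta$, gives $s_0(k_m, n) \sim (k_m/n)^{\theta} = \kappa_m^\theta = e^{-m}$. Combined with $R_m(k_m, n; \theta) \to \theta^m (m/\theta)^m/m! = m^m/m!$, this yields $V_m = m^m e^{-m}/m!$.

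The main technical hurdle is establishing the approximation $R_m \sim R_1^m/m!$ uniformly in $n$ on the relevant range. The argument reduces to bounding the contribution of tuples in which at least two indices coincide or are adjacent; since each odds ratio is $O(1/n)$ on $[k_m, n]$, this contribution is dominated by $R_1^{m-1} \cdot O(1/n) \to 0$. Everything else is a routine application of Stirling's formula and Riemann-sum convergence.
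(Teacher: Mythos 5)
Your proposal is correct and follows the same overall strategy as the paper: asymptotically solve the threshold equation $R_m(k_m,n;\theta)=R_{m-1}(k_m,n;\theta)$ supplied by Theorem~\ref{thm:unimodality.lastm}, then evaluate $V_m=\lim_n s_0(k_m+1,n)\,R_m(k_m+1,n;\theta)$ using the Gamma-ratio asymptotics for $s_0$. The one place you genuinely diverge is in how the key limit $R_m(k,n;\theta)\to \frac{\theta^m}{m!}(-\log\kappa)^m$ is obtained. The paper treats $R_m$ directly as an iterated Riemann sum and evaluates the resulting multiple integral $\theta^m\int_{x_0}^1\frac{dx_1}{x_1}\int_{x_1}^1\frac{dx_2}{x_2}\cdots\int_{x_{m-1}}^1\frac{dx_m}{x_m}$ by repeated use of $\int_s^1 \log^n(x)/x\,dx=-\log^{n+1}(s)/(n+1)$; you instead compute only the first-order sum $R_1\to-\theta\log\kappa$ and recover $R_m\sim R_1^m/m!$ by bounding the contribution of tuples with repeated indices, using $\max_j r_j=O(1/n)$, hence $\sum_j r_j^2=O(1/n)$, so that $0\le R_1^m-m!\,R_m$ is dominated by a term of order $O(1/n)\cdot R_1^{m-2}$. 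That bound is valid (note that only coincident indices matter; adjacent distinct indices cause no discrepancy, so your extra caution there is unnecessary), and your route sidesteps the multiple-integral computation while echoing the Poissonization viewpoint the paper develops separately in Section~4. Both your argument and the paper's share the same implicit, standard assumption that the optimal threshold scales linearly, $k_m(n)/n\to\kappa_m\in(0,1)$, which the paper also only asserts (via $k_m(n)=O(n)$) rather than proves, so your proposal is at the same level of rigour as the published argument.
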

	
	\begin{proof}
		Fix $m \in \{1,\ldots, n-1\}$ and let $k \in [n]=\{1,2,\ldots,n\}$  denote the current trial index. Observe that the sum of multiplicative odds 	$R_{m}(k, n; \theta)$ is an iterated Riemann sum of the function $f(x) = 1/x$ with increments $ \Delta x=1/n$. For $i \in [m]$, the integration  bounds for each integral are defined as $x_{i-1}= \lim_{n \to \infty} j_i / n$ and 1, where $j_1, \ldots, j_m$ denote the indices of summation satisfying the conditions: $j_0 = k$ and $j_{i-1} \le j_i\le n$. We denote the first integration bound $x_0(k(n)) \coloneqq x_0 = k/n$ to mark its dependence on $k$ and $n$. Explicitly, the approximation by a multiple integral becomes
		\begin{alignat}{2}\label{optimal.R}
			R_m(k+1, n, 1)&\coloneqq \theta^m\sum_{j_1}^{n} \frac{1}{j_1}\sum_{j_2=j_1}^{n}\frac{1}{j_2}\cdots\sum_{j_m=j_{m-1}}^{n}\frac{1}{j_m} \nonumberj
			&\rightarrow  \theta^m\int_{x_0}^{1} \frac{\dx[x_1]}{x_1}  \int_{x_1}^{1} \frac{\dx[x_2]}{x_2}\cdots \int_{x_{m-1}}^{1} \frac{\dx[x_m]}{x_m} \nonumberj			
			&=  \frac{(-\theta)^m}{m!} \log^m\left(x_0(k(n))\right).
		\end{alignat}
		The last equality follows from repetitively applying  the formula $$\displaystyle\int_{s}^1 \log^n(x)/x \,\dx[x] = -\frac{\log^{n+1}(s)}{(n+1)}.$$
		To derive the optimal proportion of trials to skip, let $k_m(n)$  be the optimal threshold and let$$\kappa_m\coloneqq  x_0(k_m(n))= \lim_{n \rightarrow \infty} \frac{k_m(n)}{n}, $$ 
		be the corresponding proportion to skip.
		To compute $\kappa_m$, we asymptotically solve  the equation $R_m(k_m(n)+1, n, \theta)=R_{m-1}(k_m(n)+1, n, \theta)$, 
		which by (\ref{optimal.R}) reduces to
		$$\frac{(-\theta)^{m-1}}{(m-1)!} \log^{m-1}\left(\kappa_m\right)\left[-\frac{\theta}{m} \log\left(\kappa_m\right)-1\right] = 0.$$
		
		Consequently, $\log(\kappa_m) = -m/\theta$ and  the proportion of trials skipped before being willing to stop at the $m$th last success is 
		\begin{equation}\label{threshold.km}
			\kappa_m  = \exp\left(\frac{-m}{\theta}\right).
		\end{equation}
		For assertion \rm{(ii)}, the asymptotic winning probability for stopping  at the $m$th last success is 
		\begin{alignat*}{2}
			V_m &\coloneqq \lim_{n \rightarrow \infty} s_m(k_m(n)+1, n).
		\end{alignat*}		
		
		Recall Stirling's asymptotic series where a first-order approximation of a ratio of Gamma functions reads 
		\begin{alignat*}{2}
			\frac{\Gamma(z+a)}{\Gamma(z+b)} &= z^{a-b}\left(1 + O(z^{-1})\right) ~~~\text{as } z\rightarrow \infty.
		\end{alignat*}
		
		Apply this approximation to the two Gamma ratios in $s_0(k+1, n)$, which is defined by (\ref{s0.karamata}) as
		\begin{alignat*}{2}
			s_0(k+1, n) &= \frac{(k)_{n-k}}{(k + \theta)_{n-k}}= \frac{\Gamma(n)\Gamma(\theta+k)}{\Gamma(n+\theta)\Gamma(k)}.
		\end{alignat*}
		
		As $k_m(n) = O(n)$ in our case, a straightforward computation gives
		\begin{alignat}{2}\label{s0.asymptotics}
			s_0(k_m(n)+1, n) &= \left(\frac{k_m(n)}{n}\right)^\theta\left(1+ O(n^{-1})\right)
		\end{alignat}
		Set $k=k_m(n)$ in (\ref{s0.asymptotics})  then take the limit as $n \rightarrow \infty$, to obtain $s_0(k_m(n)+1, n) \rightarrow \exp(-m)$. Next, plug (\ref{threshold.km}) into  (\ref{optimal.R}) to  secure the limit $m^m / m!$. Combining these two limits yields
		\begin{alignat*}{2}
			V_m \rightarrow \frac{m^m}{m!}\exp\left(-m\right).
		\end{alignat*}
		This completes the proof.		
	\end{proof}

	\begin{rem}	
		Matsui and Ano (2017) employed elementary symmetric polynomials to arrive at the same conclusion.
	\end{rem}
	\section{The Probabilistic method}
	Throughout this section, all Bernoulli trials have success profile $\profile$ consisting of known success probabilities $p_1, \ldots, p_n$.
	
	The following result, adapted from Section 3 in \cite{BrussPaindaveine},  is the key result upon which Bruss and Paindaveine rely for establishing Theorem \ref{thm:unimodality.lastm}. 
	\begin{assertion}[Bruss and Paindaveine (2000)]\label{lem:unimodality.lastm}
		If there exists an integer $k_m \in \{1, \ldots, n-m+1\}$, such that $\Delta_m s_{m}(k_m, n) > 0 $  holds then
		\begin{itemize}
			\item[\rm(i)] $\Delta_m s_{m-1}(k, n) > 0$~  for all ~$k \le k_m$.			
			\item[\rm(ii)]  $s_{m}(\cdot, n)$ is unimodal.
		\end{itemize}
	\end{assertion}
	Both \cite{Darroch} and  \cite{BrussPaindaveine}  relied on induction to prove part \rm(i) of Proposition \ref{lem:unimodality.lastm}. Darroch relied on the log-concavity of the Poisson binomial distribution, that is, the log-concavity of $s_m(k, n)$ as a function of $m$,  to study the most likely number of successes in inhomogeneous Bernoulli trials. This property of the Poisson-binomial distribution was established by Bonferroni\cite{Bonferroni} \footnote{This was pointed out to me by Sasha Gnedin. An earlier version of this article mistakenly attributed this discovery to Paul L{\'e}vy. } in 1933   While Bruss and Paindaveine  demonstrated the unimodality of the probability function $s_m(\cdot, n)$, a key observation has been overlooked. Roughly speaking, `the unimodality in $m$ and the unimodality in $k$' are interconnected by the crucial identity (\ref{operator.identity}) and can be inferred from one another. 
	
	The lengthy proof of the unimodality  of $s_m(\cdot,n)$ can be circumvented by considering the inherent nature of the Poisson-binomial distribution. As  a P{\'o}lya frequency (PF) distribution, it is  known to be log-concave. L{\'e}vy demonstrated the log-concavity of the Poisson-binomial distribution (p. 88 in \cite{LevyAddition}). He showed that the sequence $(a_m)$, where $a_m=s_m(k, n)$ is the probability of $m$ successes in $n-k+1$ inhomogeneous Bernoulli trials, satisfies the  well-known Newton's inequality  \cite{PitmanPF}
	\begin{equation*}
		a_m^2 \ge \left( 1 + \frac{1}{m}\right)\left( 1 + \frac{1}{n-k+1-m}\right) a_{m-1}a_{m+1}.
	\end{equation*}
	The given inequality is stronger than log-concavity implied by the $\PF{2}$ (PF of order 2) property. As a result, a $\PF{}$ sequence has either a unique mode, denoted $d$, or two consecutive modes such that $a_d = \max_m a_m$ (see p. 284 in \cite{PitmanPF}). Furthermore, the identity (\ref{operator.identity}) states that for a fixed number of trials $n$, there is an equivalence between:
	\begin{itemize}
		\item[\rm(i)] The probability function $s_m(\cdot, n)$ attaining its mode at some index $k_m(n)$, and
		\item[\rm(ii)] The Poisson binomial distribution  of the number of successes in $n-k_m(n)+1$ trials having $m$ as its mode.
	\end{itemize}
	
	This remarkable equivalence provides a crucial foundation for our new stopping rule, which necessitates the development of several auxiliary results. The next theorem, due to Darroch, establishes that the mode of the sequence $(a_m)$ differs from its mean by less than 1.
	
	\begin{thm}[Darroch's rule, Theorem 4 in \cite{Darroch}] \label{ch1.thm.darroch}
		Given $\mu$, the expected number of successes in $n$ Bernoulli trials, the most probable number of successes  $d$, satisfies
		\begin{equation*}
			\begin{dcases}
				d=m & ~~~~ \text { if }~~m \le \mu < m + \frac{1}{m+2},\\
				d=m, \text{ or }  d = m+1 \text{ or both}  & ~~~~ \text { if }~~m + \frac{1}{m+2} \le \mu \le m+1 - \frac{1}{n-m+1},\\										
				d= m+1 & ~~~~ \text { if }~~ m+1 - \frac{1}{n-m+1} < \mu \le m+1,
			\end{dcases}
		\end{equation*}
		where $m \in \{0, 1, \ldots, n\}$, $ \{x\}$ is the fractional part of $x$ and $\integeru{x}$ is its integer part.
	\end{thm}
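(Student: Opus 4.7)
The plan is to exploit the log-concavity of the sequence $(a_m)$---the $\PF{}$ property of the Poisson-binomial distribution---to characterise the mode $d$ through the ratios $\rho_m \coloneqq a_m / a_{m-1}$. By Newton's inequality these ratios are strictly decreasing, so the mode is the unique index with $\rho_d \ge 1 \ge \rho_{d+1}$, and two consecutive modes occur precisely when some $\rho_{m+1}=1$. The theorem then reduces to the question: for which means $\mu$ is the equality $a_m = a_{m+1}$ attainable by \emph{some} Poisson-binomial distribution on $n$ trials?

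For each candidate $m$, I would extremise the ratio $a_{m+1}/a_m$ over all profiles $(p_1,\ldots,p_n)$ subject to the linear constraint $\sum_i p_i = \mu$. Two explicit configurations already saturate both sides of the envelope. Placing $m+1$ trials with common probability $p=(m+1)/(m+2)$ and setting the remaining $n-m-1$ probabilities to zero produces a Binomial$(m+1,(m+1)/(m+2))$ whose atoms at $m$ and $m+1$ coincide, with mean
\begin{equation*}
\mu=\frac{(m+1)^2}{m+2}=m+\frac{1}{m+2}.
\end{equation*}
Dually, taking $m$ deterministic successes together with $n-m$ trials of common probability $1/(n-m+1)$ forces $a_m=a_{m+1}$ with mean $\mu=m+(n-m)/(n-m+1)=m+1-1/(n-m+1)$. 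These two distributions realise the boundary values appearing in the statement.

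The heart of the argument is the optimality claim: these configurations are, respectively, the minimum and maximum of $\mu$ over profiles on $n$ trials satisfying $a_m=a_{m+1}$. My approach is a two-coordinate exchange: fixing $p_i+p_j$ and varying $p_ip_j$, the difference $a_{m+1}-a_m$ is affine in $p_ip_j$ with coefficient expressible through lower-order elementary symmetric functions of the remaining probabilities. Determining the sign of this coefficient reduces the feasible set to profiles with at most two distinct values, where a one-parameter optimisation recovers the sharp constants $1/(m+2)$ and $1/(n-m+1)$.

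The main obstacle will be precisely this sign analysis. One must verify that the coefficient of $p_ip_j$ in $a_{m+1}-a_m$ retains a definite sign across the feasible region, so that the exchange consistently shifts $\mu$ in one direction. Once monotonicity is established, the three clauses of the theorem follow by combining the sharp two-sided envelope with the strict monotonicity of $\rho_m$ guaranteed by Newton's inequality: uniqueness of the mode outside the closed interval $[m+1/(m+2),\,m+1-1/(n-m+1)]$ and two consecutive modes on its boundary.
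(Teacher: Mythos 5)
This statement is quoted in the paper as a known result (Theorem 4 of Darroch, 1964) and is not proved there, so there is no in-paper argument to match your proposal against; it has to stand on its own. Your reduction is sound in outline: by log-concavity of $(a_m)$ the ratios $a_{m+1}/a_m$ decrease, so the mode is pinned down by where this ratio crosses $1$, and all three clauses would indeed follow from two extremal facts, namely that every profile with $a_{m+1}\ge a_m$ has mean at least $m+\tfrac{1}{m+2}$ and every profile with $a_{m+1}\le a_m$ has mean at most $m+1-\tfrac{1}{n-m+1}$ (applied also at the shifted indices $m-1$ and $m+1$ to exclude modes outside $\{m,m+1\}$ when $m\le\mu\le m+1$ — a step you should state explicitly, since clause 1 requires ruling out a tie at $\{m-1,m\}$ as well). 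Your two boundary configurations are correct and do verify sharpness: the Binomial$(m+1,\tfrac{m+1}{m+2})$ case and the ``$m$ sure successes plus $n-m$ trials at $\tfrac{1}{n-m+1}$'' case both satisfy $a_m=a_{m+1}$ with the stated means.

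The genuine gap is that the optimality claim — which is the entire content of Darroch's theorem, since exhibiting boundary cases only shows the constants cannot be improved — is not proved, and the exchange mechanism you sketch for it is mis-aimed as stated. If you fix $p_i+p_j$ and vary $p_ip_j$, then $\mu=\sum_k p_k$ does not move at all, so the exchange cannot ``consistently shift $\mu$ in one direction''; what it can do is, at fixed mean, move $a_{m+1}-a_m$ monotonically (each $a_k$ is affine in $p_ip_j$ once $p_i+p_j$ is fixed), and the correct formulation is to extremise $a_{m+1}-a_m$ over profiles with prescribed mean, pushing each pair to an extreme point so that the optimiser has all non-degenerate probabilities equal, and only then run the one-parameter computation that produces $\tfrac{1}{m+2}$ and $\tfrac{1}{n-m+1}$. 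The sign analysis needed to know which endpoint of the $p_ip_j$-interval the optimiser chooses — equivalently, the sign of the coefficient of $p_ip_j$ in $a_{m+1}-a_m$, which involves the symmetric functions of the remaining probabilities and need not have an obvious uniform sign — is exactly the hard step, and your proposal acknowledges it as ``the main obstacle'' without resolving it. As it stands, therefore, the argument establishes only the easy (sharpness) direction and a correct reduction scheme, not the theorem itself.
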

	
	This result offers a preliminary insight into a potentially simpler approach to identifying the $m$th  last success. In contrast to the (computationally) complex multiplicative odds rule of Bruss and Paindaveine, the new stopping strategy relies on the mean of the Poisson binomial distribution, corresponding to the sum of success probabilities across trials. However, the usefulness of Darroch's rule  has limitations. Even after the mean exceeds $m$, the most likely number of successes (the mode) might have been at $m$ for many trials. To illustrate this situation, consider the Odds Theorem, which suggests  stopping at stage $k_*$ when the sum of the odds exceeds 1 (starting from the end). Since the odds $(r_k)$ are  greater than the probabilities $(p_k)$, the sum of the odds reaches one before the corresponding sum of probabilities does. If the success probabilities at $k_*-1, k_*-2, \ldots$ are  very small, then reaching a total probability of 1 may require many additional trials.
	
	Theorem 1 of Samuels (1965) and its related Corollaries (15)--(17) offer an indirect measure of the discrepancy between the two critical thresholds based on the mean and the mode, respectively. We will adapt these Corollaries to our notation in the next theorem.
	\begin{thm}[Theorem 1, equations (15) -- (17) in \cite{Samuels1965}] \label{samuels.1516}
		Consider a positive integer  $m$. The probabilities of $m$ successes in $n$ Bernoulli trials having profile $\profile $ satisfy the following implications
		\begin{alignat}{2}
			\min_{k \le j \le n} {p_j} + \sum_{j=k}^n p_j  > m &\implies \Delta_m s_m(k,n)  > 0, \label{eqn.15}\\
			\max_{k+1 \le j \le n} {p_j} + \sum_{j=k+1}^n p_j  < m &\implies \Delta_m s_m(k+1,n)  < 0 \label{eqn.16},\\
			m \le \sum_{j=k}^n p_j  \le m+1 &\implies \Delta_m s_m(k,n)  > 0 ~\text{ and }~  \Delta_{m} s_{m+2}(k,n)  < 0 \label{eqn.17}.
		\end{alignat}
	\end{thm}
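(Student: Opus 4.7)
Each of \eqref{eqn.15}--\eqref{eqn.17} controls the sign of the backward difference $\Delta_m s_m(\cdot, n) = s_m(\cdot, n) - s_{m-1}(\cdot, n)$. By the Pólya-frequency property of the Poisson--binomial law recalled in the preceding discussion, the sign of $\Delta_j s_j$ flips exactly once as $j$ runs through $\{1, \ldots, n-k+1\}$: positive at and below the mode, non-positive beyond. Thus \eqref{eqn.15} asserts that the mode of the Poisson--binomial on $(X_k, \ldots, X_n)$ is at least $m$, and \eqref{eqn.16} that the mode on $(X_{k+1}, \ldots, X_n)$ is at most $m-1$. My plan is to derive \eqref{eqn.17} as a corollary of \eqref{eqn.15} and \eqref{eqn.16}, and then to prove \eqref{eqn.15}; the second implication \eqref{eqn.16} is its dual under the involution $p_j \mapsto 1 - p_j$, which swaps `$m$ successes' and `$N-m$ failures' and turns $\min$ into $\max$, so an identical argument handles it.

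For \eqref{eqn.17}, write $\mu := \sum_{j=k}^n p_j$. From $\mu \ge m$ and $p_j > 0$ we immediately get $\min_j p_j + \mu > m$, so \eqref{eqn.15} yields $\Delta_m s_m(k, n) > 0$. For the second conclusion I would apply \eqref{eqn.16} with $m$ replaced by $m + 2$ on the range $k, \ldots, n$: since $p_j < 1$ and $\mu \le m + 1$, we have $\max_j p_j + \mu < 1 + (m + 1) = m + 2$, giving $\Delta_m s_{m+2}(k, n) < 0$.

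To establish \eqref{eqn.15} I would induct on the number of trials $N = n - k + 1$. Isolating a trial $j^\star$ achieving $p^- = \min_j p_j$ and using the one-step recursion $s_m = p^- \tilde{s}_{m-1} + (1 - p^-) \tilde{s}_m$, where $\tilde{s}_j$ denotes the $j$-success probability for the $N-1$ retained trials, gives
\begin{equation*}
\Delta_m s_m \;=\; (1 - p^-)\,\tilde{\Delta}_m \;+\; p^-\,\tilde{\Delta}_{m-1},
\end{equation*}
with $\tilde{\Delta}_j := \tilde{s}_j - \tilde{s}_{j-1}$. Since $\tilde{p}^- \ge p^-$ and $\tilde{\mu} = \mu - p^-$, the hypothesis $p^- + \mu > m$ yields $\tilde{p}^- + \tilde{\mu} \ge \mu > m - p^- > m - 1$, so the induction hypothesis at level $m - 1$ delivers $\tilde{\Delta}_{m-1} > 0$.

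The hard part will be controlling $\tilde{\Delta}_m$, whose sign is \emph{not} forced by the shifted hypothesis $\tilde{p}^- + \tilde{\mu} > m$ and which can a priori be negative. Closing the induction then amounts to showing $|\tilde{\Delta}_m| < \frac{p^-}{1 - p^-}\,\tilde{\Delta}_{m-1}$, a quantitative bound I would extract from the Newton inequality displayed just above (applied to $(\tilde{s}_j)_j$), using crucially that $p^-$ is the smallest success probability across the \emph{full} system, not just the retained subsystem. If this algebraic manipulation proves too delicate, I would fall back on a continuous-deformation argument: on the closed region $\{p^- + \mu \ge m,\ p_j \in (0, 1),\ p^- = \min_j p_j\}$ the quantity $\Delta_m s_m$ depends smoothly on the $p_j$'s, and a Schur-convexity analysis should identify its infimum as attained at the symmetric configuration $p_1 = \cdots = p_N = m/(N+1)$, where a direct computation gives $\Delta_m s_m = 0$; strict positivity throughout the open region $p^- + \mu > m$ would then follow by continuity and monotonicity in $\mu$.
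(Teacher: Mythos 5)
The paper gives no proof of this statement at all: it is quoted (with adapted notation) from Samuels (1965), so the entire burden of a self-contained proof lies in establishing the implications themselves. Your reductions are correct: deriving (\ref{eqn.17}) from (\ref{eqn.15}) and (\ref{eqn.16}) works, using $0<p_j<1$ and applying the content of (\ref{eqn.16}) at level $m+2$ to the block $k,\ldots,n$; and the complementation $p_j\mapsto 1-p_j$ on a block of $N$ trials sends the hypothesis $\max_j p_j+\sum_j p_j<m$ to $\min_j q_j+\sum_j q_j>N-m+1$ and the conclusion $s_m<s_{m-1}$ to the conclusion of (\ref{eqn.15}) at level $N-m+1$ for the complementary profile, so (\ref{eqn.16}) does follow from (\ref{eqn.15}).

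The genuine gap is that (\ref{eqn.15}), the heart of the theorem, is never proved. Your induction step is set up correctly, $\Delta_m s_m=(1-p^-)\tilde\Delta_m+p^-\,\tilde\Delta_{m-1}$ with $\tilde\Delta_{m-1}>0$, but the inductive hypothesis is purely qualitative: it yields no lower bound on $\tilde\Delta_{m-1}$ and no upper bound on $|\tilde\Delta_m|$ when $\tilde\Delta_m<0$, so the needed inequality $|\tilde\Delta_m|<\frac{p^-}{1-p^-}\,\tilde\Delta_{m-1}$ simply does not follow from what is being inducted on; Newton's inequality for the retained subsystem cannot supply it either, since it carries no information about $p^-$ beyond $\tilde p^-\ge p^-$. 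Closing the induction would require a strengthened, quantitative statement, which is exactly the missing content. The fallback sketch is likewise unsubstantiated: no argument is offered that $\Delta_m s_m$ is Schur-convex, that its infimum over $\{p^-+\mu\ge m\}$ is attained at the exchangeable configuration $p_1=\cdots=p_N=m/(N+1)$, or that the relevant monotonicity in $\mu$ holds -- and identifying the i.i.d.\ case as extremal is essentially the full strength of Samuels' result, whose original proof rests on a genuine extremal analysis (in the spirit of Hoeffding's work on the Poisson--binomial with fixed mean), not on a soft continuity argument. As it stands, your proposal establishes (\ref{eqn.16}) and (\ref{eqn.17}) conditionally on (\ref{eqn.15}), but not (\ref{eqn.15}) itself; if the intent is to treat the theorem as the paper does, the honest course is to cite Samuels for (\ref{eqn.15})--(\ref{eqn.16}) and present only the derivation of (\ref{eqn.17}) as new exposition.
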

	
	\noindent Our next result extends Samuels' Theorem by establishing specific conditions under which the  mode of the probability function $s_m(\cdot, n)$ occurs at a particular index. 
	
	\begin{thm}\label{thm.samuels}
		If for a fixed integer $m \in [n-1]$  there exists a trial index $k^* \in  [n-m]$, such that 
		\begin{equation}\label{proof.samuels.eqn}
			\begin{dcases}
				p_{k^*+1} = \displaystyle\max_{k^*+1 \le j \le n} p_j,\\
				p_{k^*+1}  - p_{k^*} < \min_{k^* \le j \le n} p_j, \text{and} \\
				m-\min_{k^* \le j \le n} p_j< p_{k^*} + \cdots + p_n < m  + p_{k^*} - p_{k^*+1},\\
			\end{dcases}
		\end{equation}
		then index $k^*$ coincides with  the location of the mode of the probability function $s_m(\cdot, n)$.
	\end{thm}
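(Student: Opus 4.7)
The plan is to deduce a pair of sign constraints on $\Delta_m s_m$ at consecutive indices by feeding the hypotheses of the theorem into the two implications (\ref{eqn.15}) and (\ref{eqn.16}) of Samuels' Theorem \ref{samuels.1516}, and then to pin down the mode using the unimodality supplied by Proposition \ref{lem:unimodality.lastm}.

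First, I would apply (\ref{eqn.15}) at $k=k^*$. The left half of the two-sided inequality in the third line of (\ref{proof.samuels.eqn}) is exactly $\min_{k^* \le j \le n} p_j + \sum_{j=k^*}^n p_j > m$, so (\ref{eqn.15}) produces $\Delta_m s_m(k^*, n) > 0$; via identity (\ref{operator.identity}) and the strict positivity of $p_{k^*-1}$, this is equivalent to $s_m(k^*, n) > s_m(k^*-1, n)$. Next, I would apply (\ref{eqn.16}) with $k=k^*$. The first hypothesis of (\ref{proof.samuels.eqn}) lets me replace $\max_{k^*+1 \le j \le n} p_j$ by $p_{k^*+1}$, and the right half of the two-sided inequality rearranges, via $\sum_{j=k^*+1}^n p_j = \sum_{j=k^*}^n p_j - p_{k^*}$, into the required bound $p_{k^*+1} + \sum_{j=k^*+1}^n p_j < m$. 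Hence (\ref{eqn.16}) delivers $\Delta_m s_m(k^*+1, n) < 0$, equivalently $s_m(k^*+1, n) < s_m(k^*, n)$.

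The positivity of $\Delta_m s_m(k^*, n)$ just established is precisely the hypothesis of Proposition \ref{lem:unimodality.lastm}, so $s_m(\cdot, n)$ is unimodal. Coupling unimodality with the strict inequalities $s_m(k^*-1, n) < s_m(k^*, n) > s_m(k^*+1, n)$ forces the mode to coincide with $k^*$, which is the conclusion sought. The second condition in (\ref{proof.samuels.eqn}) is not used explicitly; rearranging it gives $m-\min_{k^* \le j \le n} p_j < m + p_{k^*} - p_{k^*+1}$, so its sole purpose is to certify that the two-sided inequality in the third condition has a nonempty admissible range and that the theorem is therefore not vacuous. I do not foresee a real obstacle — the only delicate point is the algebraic bookkeeping needed to see that the third condition of (\ref{proof.samuels.eqn}), combined with the first, matches the two hypotheses of Samuels' implications after the maximum is resolved and the sum is re-indexed.
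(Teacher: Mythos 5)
Your proof is correct and follows essentially the same route as the paper's: feed the left half of the third condition into (\ref{eqn.15}) at $k^*$ and the right half (rewritten via the first condition) into (\ref{eqn.16}) at $k^*+1$, then transfer the signs to $\Delta_k s_m$ through (\ref{operator.identity}); your reading of the second condition as merely guaranteeing a nonempty range matches the paper's contradiction argument. If anything, your explicit appeal to unimodality (via Proposition \ref{lem:unimodality.lastm}) to upgrade the local maximum at $k^*$ to the mode is a step the paper leaves implicit, so no gap.
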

	\begin{proof}
		Since $p_n + \cdots + p_{n-m+1} < m$, it is evident that    $k^*\le n-m$ for all $m$. Now,	suppose that there exists an index $k^* \in [n-m]$ satisfying (\ref{proof.samuels.eqn}). 
		Consider the opposite case of the second assumption instead. That is, $p_{k^*+1}  - p_{k^*} \ge \displaystyle\min_{k^* \le j \le n} p_j + \epsilon$, where $\epsilon \ge 0$. Under this condition, the third assumption in (\ref{proof.samuels.eqn}) becomes: $$m-\min_{k^* \le j \le n} p_j< p_{k^*} + \cdots + p_n < m  - \min_{k^* \le j \le n} p_j -\epsilon, $$
		which is a contradiction. Now, use (\ref{eqn.15}) from Theorem \ref{samuels.1516} to establish that, based on the condition $p_{k^*} + \cdots + p_n > m- \displaystyle\min_{k^* \le j \le n} p_j$, it follows that
		\begin{alignat}{2}\label{proof.ineq1}
			\Delta_m s_m(k^*) > 0.
		\end{alignat}
		Next,  the right-hand side of the third inequality in (\ref{proof.samuels.eqn}) implies that $p_{k^*+1} + \cdots + p_n < m- p_{k^*+1}$. However, by the first assumption, $p_{k^*+1}$ is the maximum probability among the remaining trials. Consequently, use (\ref{eqn.16}) this time  to conclude that 
		\begin{alignat}{2}\label{proof.ineq2}
			\Delta_m s_m(k^*+1) < 0.
		\end{alignat}
		By combining (\ref{proof.ineq1}) and (\ref{proof.ineq2}) together with (\ref{operator.identity}), we  immediately obtain 
		\begin{alignat*}{2}
			\begin{dcases}
				\Delta_k s_m(k^*+1) < 0,\\
				\Delta_k s_m(k^*) > 0. 
			\end{dcases}
		\end{alignat*}
		These two inequalities imply that $k^*$ is the location of the mode, which is the desired result.
	\end{proof}
	\begin{cor}\label{cor.samuels}
		If  from some index $k^* \in [n-m]$, the success probabilities $p_k^*$ and $p_{k^*+1}$ are the largest and second-largest in the remaining trials, then the following condition $$m-\min_{k^* \le j \le n} p_j< p_{k^*} + \cdots + p_n < m  + p_{k^*} - p_{k^*+1},$$ is sufficient for $k^*$ to coincide with the index of the mode of the probability function $s_m(\cdot, n)$.
	\end{cor}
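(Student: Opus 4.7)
The plan is to deduce the corollary directly from Theorem \ref{thm.samuels} by verifying that the three bulleted conditions in (\ref{proof.samuels.eqn}) are all implied by the corollary's hypotheses. So the strategy is simply to match each hypothesis of the corollary to one of the conditions of the preceding theorem.

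First, I would handle the maximum condition $p_{k^*+1} = \max_{k^*+1\le j\le n} p_j$. This is immediate from the corollary's assumption: by hypothesis $p_{k^*}$ is the largest and $p_{k^*+1}$ is the second-largest among $p_{k^*},\ldots,p_n$, so restricting the max to $j \ge k^*+1$ removes only $p_{k^*}$ and leaves $p_{k^*+1}$ as the maximum of the remaining tail. Next, I would verify the second condition $p_{k^*+1} - p_{k^*} < \min_{k^*\le j\le n} p_j$. Since $p_{k^*}$ is the overall largest among $p_{k^*},\ldots,p_n$, we have $p_{k^*+1}\le p_{k^*}$, hence $p_{k^*+1} - p_{k^*} \le 0$. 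On the other hand, the standing assumption of Section 2 that $0 < p_k < 1$ guarantees $\min_{k^*\le j\le n} p_j > 0$, so the strict inequality holds automatically.

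The third condition of (\ref{proof.samuels.eqn}) is precisely the sandwich inequality stated in the corollary, so nothing needs to be verified there. Having checked all three premises of Theorem \ref{thm.samuels}, I would conclude that $k^*$ is the location of the mode of $s_m(\cdot,n)$, which is the desired assertion.

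There is essentially no obstacle here: the corollary is a convenient repackaging of Theorem \ref{thm.samuels} under the natural assumption that the top two success probabilities among the remaining trials occur consecutively at $k^*$ and $k^*+1$. The only subtle point worth flagging in the write-up is the observation that positivity of the $p_j$'s is what makes the second condition of (\ref{proof.samuels.eqn}) trivially true once $p_{k^*}$ is known to dominate the tail.
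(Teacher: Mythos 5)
Your proposal is correct and follows essentially the same route as the paper: both reduce the corollary to Theorem \ref{thm.samuels} by noting that the hypotheses force $p_{k^*+1}=\max_{k^*+1\le j\le n}p_j$ and $p_{k^*+1}-p_{k^*}\le 0<\min_{k^*\le j\le n}p_j$, so only the sandwich condition needs to be assumed. Your write-up merely makes explicit the positivity argument that the paper leaves implicit.
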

	\begin{proof}
		Suppose there exists a $k^*$ such that $p_{k^*+1} = \displaystyle \max_{k^*+1 \le j \le n } p_j$ and $p_{k^*+1} \le p_{k^*}$. This implies that the first two requirements of Theorem \ref{thm.samuels} are fulfilled. The assertion of the Corollary is immediate as we only require the condition $m-\min_{k^* \le j \le n} p_j< p_{k^*} + \cdots + p_n < m  + p_{k^*} - p_{k^*+1}$ to hold.
	\end{proof}	
	
	\begin{cor}
		If  from some index $k^* \in [n-m]$, the success probabilities are decreasing, then the following condition $$m-\min_{k^* \le j \le n} p_j< p_{k^*} + \cdots + p_n < m  + p_{k^*} - p_{k^*+1},$$ is sufficient for $k^*$ to coincide with the index of the mode of the probability function $s_m(\cdot, n)$.
	\end{cor}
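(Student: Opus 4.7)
The plan is to observe that this corollary is an immediate specialisation of Corollary \ref{cor.samuels}. Since the displayed inequality in the hypothesis is literally the same as the one required there, all that remains is to verify the ordering assumption on the trial probabilities at indices $k^*$ and $k^*+1$.

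First, I would note that if $p_{k^*} \ge p_{k^*+1} \ge \cdots \ge p_n$, then trivially $p_{k^*}$ is the largest of $p_{k^*}, p_{k^*+1}, \ldots, p_n$, and $p_{k^*+1}$ is the largest of $p_{k^*+1}, \ldots, p_n$, i.e.\ the second-largest in the full block starting at $k^*$. This is exactly the hypothesis required by Corollary \ref{cor.samuels}. Invoking Corollary \ref{cor.samuels} with the given sandwich condition on $p_{k^*}+\cdots+p_n$ then yields that $k^*$ is the mode of $s_m(\cdot,n)$.

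There is no genuine obstacle here: the corollary is stated essentially for emphasis, to record the fact that under the natural assumption of decreasing success probabilities (the setting usually considered in last-success problems), the ordering conditions of Theorem \ref{thm.samuels} are automatic, so only the sum-of-probabilities sandwich needs to be checked to locate the mode. Accordingly, the written proof should be one or two lines invoking Corollary \ref{cor.samuels}.
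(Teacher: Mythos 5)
Your proposal is correct and matches the paper's own argument: the paper likewise notes that decreasing probabilities make $p_{k^*}$ and $p_{k^*+1}$ the largest and second-largest among the remaining trials, and then applies Corollary \ref{cor.samuels}. Nothing further is needed.
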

	\begin{proof}
		As $p_{k} = \displaystyle \max_{k \le j \le n } p_j$ for each $k \in [n]$, the assertion is proven by applying Corollary \ref{cor.samuels}. 
	\end{proof}	
	
	At the expense of the precise knowledge about the location of the mode, the next Proposition loosens the bounds on the sum of probabilities.
	\begin{thm}\label{lem.samuels}
		Consider the success profile $\profile$. If there exists a critical index $k^* \in [n-m]$ such that 
		\begin{itemize}
			\item[\rm(i)] $p_{k^*+1}  - p_{k^*} < \displaystyle\min_{k^* \le j \le n} p_j$,
			\item[\rm(ii)] $p_{k^*+1}  = \displaystyle\max_{k^*+1 \le j \le n}$, and
			\item[\rm(iii)]$m-p_{k^*} \le p_{k^*+1} + \cdots + p_n < m$,
		\end{itemize}
		then, the location of the mode of the probability function $s_m(\cdot, n)$ is either $k^*$ or $k^*+1$.
	\end{thm}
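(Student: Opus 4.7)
The plan is to reduce the claim to two sign conditions on the backward difference $\Delta_k s_m(\cdot, n)$ and then invoke the implications of Theorem \ref{samuels.1516}. Since $s_m(\cdot, n)$ is unimodal by Proposition \ref{lem:unimodality.lastm}, showing that the mode lies in $\{k^*, k^*+1\}$ amounts to showing that $s_m$ is strictly increasing at $k^*$ and strictly decreasing at $k^*+2$, i.e., $\Delta_k s_m(k^*, n) > 0$ and $\Delta_k s_m(k^*+2, n) < 0$. By identity (\ref{operator.identity}) and the strict positivity of the $p_j$, these are equivalent to $\Delta_m s_m(k^*, n) > 0$ and $\Delta_m s_m(k^*+2, n) < 0$, which is exactly the form handled by Samuels' inequalities.

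For the first inequality, I would apply implication (\ref{eqn.15}) at $k = k^*$: the lower bound in (iii) rearranges to $\sum_{j=k^*}^n p_j \ge m$, and since every $p_j$ is strictly positive the minimum $\min_{k^* \le j \le n} p_j$ contributes a strict surplus, so $\min_{k^* \le j \le n} p_j + \sum_{j=k^*}^n p_j > m$, as required. For the second, I would apply (\ref{eqn.16}) at the shifted index $k = k^*+1$, whose premise is $\max_{k^*+2 \le j \le n} p_j + \sum_{j=k^*+2}^n p_j < m$. Hypothesis (ii) gives $\max_{k^*+2 \le j \le n} p_j \le p_{k^*+1}$, so the left-hand side is bounded above by $p_{k^*+1} + \sum_{j=k^*+2}^n p_j = \sum_{j=k^*+1}^n p_j$, and the upper bound in (iii) makes this strictly less than $m$.

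Combining the two sign conditions with the unimodality guaranteed by Proposition \ref{lem:unimodality.lastm} (whose hypothesis $\Delta_m s_m(\cdot, n) > 0$ at some index is certified by the first inequality) places the mode of $s_m(\cdot, n)$ in $\{k^*, k^*+1\}$, completing the argument. The main obstacle is not any single estimate but the careful index bookkeeping: the bounds in (iii) are written for $\sum_{j=k^*+1}^n p_j$, while the premises of (\ref{eqn.15}) and (\ref{eqn.16}) are phrased for a generic starting index $k$, and one must apply (\ref{eqn.16}) one step to the right of $k^*$ rather than at $k^*$ itself in order to convert the looser upper bound in (iii) into a Samuels-type premise via hypothesis (ii). Hypothesis (i) does not seem to enter the core argument; as in Theorem \ref{thm.samuels}, it appears to play a compatibility role ensuring that the interval for $\sum_{j=k^*+1}^n p_j$ carved out by (iii) is consistent with (ii).
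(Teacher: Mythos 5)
Your argument is correct, and it uses the same ingredients as the paper (Samuels' implications, the identity (\ref{operator.identity}), and unimodality via Proposition \ref{lem:unimodality.lastm}), but the route differs in a worthwhile way. For the lower bound the paper rewrites (iii) as $m\le \sum_{j\ge k^*}p_j< m+p_{k^*}$ and invokes implication (\ref{eqn.17}), whereas you use (\ref{eqn.15}) together with the strict positivity of $\min_{k^*\le j\le n}p_j$; both give $\Delta_m s_m(k^*,n)>0$. The real divergence is in the upper bound: the paper splits into three cases according to the position of $\sum_{j\ge k^*+1}p_j$ relative to $m-p_{k^*+1}$, invoking Theorem \ref{thm.samuels} (and hence hypothesis (i)) in the first case and (\ref{eqn.16}) at $k=k^*$ in the other two, while you apply (\ref{eqn.16}) once at the shifted index $k=k^*+1$, where hypothesis (ii) and the bare upper bound in (iii) give $\max_{k^*+2\le j\le n}p_j+\sum_{j\ge k^*+2}p_j\le \sum_{j\ge k^*+1}p_j<m$, hence $\Delta_m s_m(k^*+2,n)<0$ and, by (\ref{operator.identity}) and unimodality, mode $\le k^*+1$. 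This uniform treatment eliminates the case analysis entirely (including the delicate boundary case $\sum_{j\ge k^*+1}p_j=m-p_{k^*+1}$, where the strict premise of (\ref{eqn.16}) at $k=k^*$ is only marginally available) and confirms your observation that hypothesis (i) is not needed for the stated conclusion; what the paper's case analysis buys instead is the extra information that in its first case the mode is pinned exactly at $k^*$. The only loose end, shared with the paper, is the degenerate index range when $k^*+2>n$ (e.g.\ $m=1$, $k^*=n-1$), where the conclusion mode $\le k^*+1$ holds trivially and no appeal to (\ref{eqn.16}) is needed.
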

	\begin{proof}
		We can rewrite assumption \rm(iii) as
		$$m-\displaystyle\min_{k^* \le j \le n} p_j < p_{k^*} + p_{k^*+1} + \cdots + p_n < m  + p_{k^*}. $$
		This implies that $m < p_{k^*} + p_{k^*+1} + \cdots + p_n < m +1 $. By reference to implication (\ref{eqn.17}), this yields $\Delta_m s_m(k,n) > 0$. Making use of (\ref{operator.identity}), we further infer that $\Delta_k s_m(k,n) > 0$ . Consequently, the mode of the probability function $s_m(\cdot, n)$ is at least at index $k^*$.
		To determine the upper bound on the mode's location, we consider three cases based on assumption \rm(iii):
		\begin{itemize}
			\item Case 1: $p_{k^*+1} + \cdots + p_n < m   -p_{k^*+1}$. This inequality only holds when $p_{k^*} > p_{k^*+1}$. In this scenario, the upper bound on the sum of probabilities becomes stricter. Adding $p_k^*$ to both sides of the inequality and considering assumption \rm(iii), we obtain  $$m \le p_{k^*} + p_{k^*+1} + \cdots + p_n < m  + p_{k^*} -p_{k^*+1}.$$ 
			Given that $m < p_{k^*} + p_{k^*+1} + \cdots + p_n < m +1 $, by the previous analysis, the mode of $s_m(\cdot, n)$ is at least at index $k^*$. If we incorporate assumptions \rm(i) and \rm(ii), then Theorem \ref{thm.samuels} ensures that the mode is located precisely at index $k^*$. 
			
			\item Case 2: $p_{k^*+1} + \cdots + p_n = m   -p_{k^*+1}$. This scenario only holds if $p_{k^*} \ge p_{k^*+1}$. In this case, combining the identity $p_{k^*+1} + \cdots + p_n = m   -p_{k^*+1}$ with assumption \rm(ii) yields
			$$\displaystyle\min_{k^*+1 \le j \le n} p_j  + p_{k^*+1} + \cdots + p_n < m.$$
			Consequently, it follows by (\ref{eqn.16}) that $\Delta_m s_m(k^*+1, n) < 0$. Using (\ref{operator.identity}) once more, we obtain $\Delta_k s_m(k^*+1,n) < 0$. Therefore, the mode's location cannot exceed $k^*+1$.
			
			\item Case 3: $p_{k^*+1} + \cdots + p_n > m   -p_{k^*+1}$. In the final setting, the stricter lower bound on the sum of probabilities implies by (\ref{eqn.16}) that $\Delta_m s_m(k^*+1, n) < 0$. A condition we encountered in the second case. Therefore, the mode cannot exceed $k^*+1$.		
		\end{itemize}
		This completes the proof.
	\end{proof}	
	\begin{cor}
		For fixed $m \in [n-1]$, if there exists an index $k^* \in [n-m]$ such that the following conditions are satisfied 
		\begin{itemize}
			\item[\rm(i)] $p_k^*$ and $p_{k^*+1}$ are the largest and second-largest success probabilities in the remaining trials.
			\item[\rm(ii)]$m-p_{k^*} \le p_{k^*+1} + \cdots + p_n < m,$ 
		\end{itemize}
		then the location of the mode of the probability function $s_m(\cdot, n)$ coincides with index $k^*$ or with index $k^*+1$.
	\end{cor}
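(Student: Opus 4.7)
The plan is to reduce this corollary to Theorem \ref{lem.samuels} by showing that hypotheses (i) and (ii) of the corollary imply all three hypotheses of that theorem. Condition (ii) of the corollary is verbatim condition (iii) of the theorem, so the only substantive step is extracting the two ordering hypotheses (i) and (ii) of the theorem from the ``largest and second-largest'' assumption.

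First I would unpack the meaning of hypothesis (i): that $p_{k^*} \ge p_{k^*+1} \ge p_j$ for every $j \in \{k^*+2,\ldots,n\}$. The inequality $p_{k^*+1} \ge p_j$ for all $j \ge k^*+2$, combined with the trivial equality $p_{k^*+1} \ge p_{k^*+1}$, immediately gives $p_{k^*+1} = \max_{k^*+1 \le j \le n} p_j$, which is hypothesis (ii) of Theorem \ref{lem.samuels}.

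Next I would handle hypothesis (i) of Theorem \ref{lem.samuels}, namely $p_{k^*+1} - p_{k^*} < \min_{k^* \le j \le n} p_j$. Since $p_{k^*}$ is the maximum over $\{k^*,\ldots,n\}$, we have $p_{k^*+1} - p_{k^*} \le 0$, whereas $\min_{k^* \le j \le n} p_j > 0$ by the standing assumption $0 < p_k < 1$. Hence the inequality holds strictly and without any additional work.

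Having verified the three hypotheses of Theorem \ref{lem.samuels}, the conclusion (that the mode of $s_m(\cdot, n)$ is at $k^*$ or $k^*+1$) follows directly. There is no real obstacle here; the corollary is simply a convenient restatement under the natural monotonicity assumption that $p_{k^*}$ and $p_{k^*+1}$ dominate the remaining trials, a condition that is easier to check in practice than the asymmetric gap condition (i) of the theorem.
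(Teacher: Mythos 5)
Your proposal is correct and takes the same route as the paper, which simply invokes Theorem \ref{lem.samuels}; you merely spell out the (easy) verification that conditions (i)--(ii) of the corollary imply the theorem's three hypotheses, including the observation that $p_{k^*+1}-p_{k^*}\le 0 < \min_{k^*\le j\le n} p_j$ under the standing assumption $0<p_k<1$.
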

	\begin{proof}
		The result is immediate by application of Theorem \ref{lem.samuels}.
	\end{proof}
	\begin{cor}
		For fixed $m \in [n-1]$, if the success probabilities $p_1, \ldots, p_n$ are decreasing and there exists an index $k^* \in [n-m]$ such that $$m-p_{k^*} \le p_{k^*+1} + \cdots + p_n < m,$$ then the location of the mode of the probability function $s_m(\cdot, n)$ coincides with index $k^*$ or with index $k^*+1$.
	\end{cor}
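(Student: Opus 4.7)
The plan is to derive this statement as an immediate consequence of Theorem \ref{lem.samuels} (or, equivalently, of the preceding corollary), by checking that the monotonicity hypothesis automatically supplies the two structural conditions about the location of the largest and second-largest probabilities.

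First I would observe that since $p_1 \ge p_2 \ge \cdots \ge p_n$, for any index $k^* \in [n-m]$ one has $p_{k^*+1} \ge p_{k^*+2} \ge \cdots \ge p_n$, which gives
\begin{equation*}
p_{k^*+1} = \max_{k^*+1 \le j \le n} p_j,
\end{equation*}
establishing condition (ii) of Theorem \ref{lem.samuels}. Combined with $p_{k^*} \ge p_{k^*+1}$, this also shows that $p_{k^*}$ and $p_{k^*+1}$ are respectively the largest and second-largest success probabilities among the remaining trials, which is the structural hypothesis of the preceding corollary.

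Next I would verify condition (i) of Theorem \ref{lem.samuels}: from $p_{k^*+1} \le p_{k^*}$ we get $p_{k^*+1} - p_{k^*} \le 0$, whereas $\min_{k^* \le j \le n} p_j = p_n > 0$ by the standing assumption $0 < p_k < 1$. Hence
\begin{equation*}
p_{k^*+1} - p_{k^*} \le 0 < \min_{k^* \le j \le n} p_j,
\end{equation*}
so (i) holds. Condition (iii) of Theorem \ref{lem.samuels} is exactly the hypothesis assumed in the present statement. With all three conditions verified, Theorem \ref{lem.samuels} yields that the mode of $s_m(\cdot, n)$ is located at $k^*$ or at $k^*+1$, which is the desired conclusion.

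There is no real obstacle here: the whole content of the corollary is that monotonicity is a clean sufficient condition under which the ordering hypotheses of Theorem \ref{lem.samuels} hold for every admissible $k^*$, so the proof reduces to the two one-line verifications above followed by a direct citation.
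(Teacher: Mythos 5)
Your proof is correct and follows essentially the same route as the paper: the paper's proof simply notes that monotonicity makes $p_{k^*}$ and $p_{k^*+1}$ the largest and second-largest remaining probabilities and then invokes the preceding corollary (itself an application of Theorem \ref{lem.samuels}), which is exactly the verification you spell out, including the observation that $p_{k^*+1}-p_{k^*}\le 0 < \min_{k^*\le j\le n}p_j$.
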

	\begin{proof}
		The result is obvious given that $p_k^*$ and $p_{k^*+1}$ are the largest and second-largest success probabilities in the remaining trials.
	\end{proof}
	Having established the necessary refinements to Samuels's findings, we now turn to the main result of this paper.
	
	\begin{thm}[Sum the Probabilities to $m$ and Stop]\label{thm.darroch.problem1}
		For the problem of  selecting  the {$m$th  last success} in a sequence of $n$ independent Bernoulli trials with success profile $\profile$, consider the threshold  policy {$\tau^*$} which calls for stopping on the first success, if any, on or after index $k^\prime(n)$ given by
		\begin{equation}\label{def.threshold.darroch}
			k^\prime(n) \coloneqq \max\{i \in [n-m]:  \sum_{k = i}^n p_k \ge m \} \vee 1
		\end{equation}
		with $\max \{ \varnothing\} =-\infty.$  If {the probabilities} $p_{k^\prime(n)} $ and $p_{k^\prime(n)+1}$ {satisfy the conditions} 
		\begin{equation*}
			\begin{dcases}
				p_{k^\prime(n)} &\ge  p_{k^\prime(n)+1}, ~\text { and }\\
				p_{k^\prime(n)+1}&= \max_{k^\prime(n)+1 \le j \le n} p_j,
			\end{dcases}
		\end{equation*}
		{then $\tau^*$} is $\varepsilon$-optimal, where 
		$$0 \le \varepsilon \le   p_{k^\prime(n)}p_{k^\prime(n)+1}.$$
		Moreover, in the case where $\profile$ coincides with the Karamata-Stirling success profile, {one can choose}  $\displaystyle\varepsilon=O\left(\frac{1}{n^2}\right)$ and the threshold rule is asymptotically optimal.
	\end{thm}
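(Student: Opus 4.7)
The plan is to verify the hypotheses of Theorem~\ref{lem.samuels} at $k^* = k^\prime(n)$, which will pin the mode of $s_m(\cdot,n)$ to the set $\{k^\prime(n),\, k^\prime(n)+1\}$, and then to bound the at most one-step loss using identity~(\ref{operator.identity}) together with the $k$-recursion $s_j(k,n) = p_k s_{j-1}(k+1,n) + (1-p_k) s_j(k+1,n)$. Hypothesis (ii) of Theorem~\ref{lem.samuels} is the stated maximality condition on $p_{k^\prime(n)+1}$. Hypothesis (i), $p_{k^\prime(n)+1} - p_{k^\prime(n)} < \min_{j \ge k^\prime(n)} p_j$, holds because the left-hand side is non-positive under the assumption $p_{k^\prime(n)} \ge p_{k^\prime(n)+1}$ while the right-hand side is positive. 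Hypothesis (iii), $m - p_{k^\prime(n)} \le \sum_{j=k^\prime(n)+1}^n p_j < m$, is exactly the defining property of $k^\prime(n)$: the lower bound rearranges to $\sum_{j=k^\prime(n)}^n p_j \ge m$, and the upper bound is the maximality built into definition~(\ref{def.threshold.darroch}).

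Theorem~\ref{lem.samuels} then places the mode at $k^\prime(n)$ or $k^\prime(n)+1$. The first case makes $\tau^*$ exactly optimal, so $\varepsilon = 0$. In the second case, identity~(\ref{operator.identity}) supplies
\[
\varepsilon \;=\; s_m(k^\prime(n)+1, n) - s_m(k^\prime(n), n) \;=\; p_{k^\prime(n)} \, \Delta_m s_m(k^\prime(n)+1, n).
\]
To bound the remaining factor I would apply the $k$-recursion to both $s_m$ and $s_{m-1}$ at $k = k^\prime(n)+1$ and subtract, obtaining
\[
\Delta_m s_m(k^\prime(n)+1, n) \;=\; p_{k^\prime(n)+1} \, \Delta_m s_{m-1}(k^\prime(n)+2, n) \;+\; (1 - p_{k^\prime(n)+1}) \, \Delta_m s_m(k^\prime(n)+2, n).
\]
Since the mode of $s_m(\cdot,n)$ has been crossed at $k^\prime(n)+1$, identity~(\ref{operator.identity}) forces $\Delta_m s_m(k^\prime(n)+2, n) \le 0$, whereas trivially $\Delta_m s_{m-1}(k^\prime(n)+2, n) \le s_{m-1}(k^\prime(n)+2, n) \le 1$. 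Therefore $\Delta_m s_m(k^\prime(n)+1, n) \le p_{k^\prime(n)+1}$, and the desired bound $\varepsilon \le p_{k^\prime(n)} p_{k^\prime(n)+1}$ follows.

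For the Karamata-Stirling profile $p_k = \theta/(\theta+k-1)$, the sequence is strictly decreasing, so both structural hypotheses on $p_{k^\prime(n)}$ and $p_{k^\prime(n)+1}$ are automatic. The defining inequalities for $k^\prime(n)$ force $\sum_{j=k^\prime(n)}^n \theta/(\theta+j-1) \approx m$; the harmonic-sum-to-integral comparison already carried out in Section~2 then yields $k^\prime(n) = \kappa_m n + O(1)$ with $\kappa_m = e^{-m/\theta}$, hence $p_{k^\prime(n)}, p_{k^\prime(n)+1} = \Theta(n^{-1})$ and $\varepsilon = O(n^{-2})$. Asymptotic optimality follows because the optimal value converges to $V_m > 0$. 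The only genuinely non-routine step is the bound $\Delta_m s_m(k^\prime(n)+1, n) \le p_{k^\prime(n)+1}$: the trick is to pass the recursion one step past the mode so that the sign information $\Delta_m s_m(k^\prime(n)+2, n) \le 0$ from the mode location combines with the trivial bound on $\Delta_m s_{m-1}(k^\prime(n)+2, n)$ to produce a factor of $p_{k^\prime(n)+1}$ via the convex-combination structure.
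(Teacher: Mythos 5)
Your proof is correct and follows essentially the same route as the paper: you verify the hypotheses of Theorem~\ref{lem.samuels} at $k^\prime(n)$ to confine the mode to $\{k^\prime(n), k^\prime(n)+1\}$, write the at-most-one-step loss via identity~(\ref{operator.identity}), bound it by $p_{k^\prime(n)}p_{k^\prime(n)+1}$, and then run the same Karamata--Stirling asymptotics. The only cosmetic difference is the last estimate: the paper extracts the factor $p_{k^\prime(n)+1}$ from the odds-polynomial recursion~(\ref{Rj(k+1)}) and the inequality $R_m(k_m+1,n)\le R_{m-1}(k_m+1,n)$, while you obtain it from the one-step probability recursion for $s_m$ combined with the sign of $\Delta_m s_m$ one step past the mode---both rest on the same fact and yield the same bound.
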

	\begin{proof}
		Fix $m \in \{1, 2, \ldots, n-1\} $ and let $k_m$ be the mode's index of the probability function $s_m(\cdot, n)$. By the unimodality of this function, $s^*_m(n)= s_m(k_m, n)$  is the maximum probability of stopping on the $m$th  last success. Now, let $s_m(k^\prime(n), n)$ be the  probability of stopping at the threshold defined by the mean rule (\ref{def.threshold.darroch}). 
		We want to show that for every $n$, there exists an $\varepsilon \ge 0$ such that
		$$s^*_m(n) - \varepsilon \le s_m(k^\prime(n) , n) \le  s^*_m(n) ~~\text{ and }~~ \varepsilon \to 0 ~ \text{ as } n\to \infty. $$

		First, it is not difficult to show that $0 \le  k_m  - k^\prime(n)\le 1$. By hypothesis, it holds that $p_{k^\prime(n)}  >  p_{k^\prime(n)+1} - \displaystyle\min_{k^* \le j \le n} p_j$  and $p_{k^\prime(n)+1} = \displaystyle\max_{k^\prime(n)+1 \le j\le n} p_j$. By applying Theorem \ref{lem.samuels}, this implies that the mode can only be in two locations. In the first, $k^\prime(n) = k_m$  and hence $\varepsilon = 0$. Conversely,  we need to wait for one more trial, which has the index $k_m-1$, and then stop at the next success. Since, at most, we miss the optimal threshold by one trial, we may define $\varepsilon$ as follows 
		\begin{alignat*}{2}
			\varepsilon &= \Delta_k s_m(k_m, n)\\
			&= p_{k_m-1}\Delta_m s_m(k_m, n)\\
			&= p_{k_m-1}s_0(k_m, n)\left(  R_m(k_m, n) -R_{m-1}(k_m, n)\right)\\
			&\le p_{k_m-1}s_0(k_m, n)\left(  R_m(k_m, n) -R_{m-1}(k_m+1, n)\right) ~~(R_{m-1} \text{ decreasing in the 1st parameter}) \\
			&\le p_{k_m-1}s_0(k_m, n)\left(  R_m(k_m, n) -R_{m}(k_m+1, n)\right) ~~~~~ (R_m(k_m+1, n) \le R_{m-1}(k_m+1, n))\\						
			&= p_{k_m-1}s_0(k_m, n)\left(r_{k_m}R_{m-1}(k_m+1, n)\right) \qquad\qquad~~~ (\text{By } 	(\ref{Rj(k+1)}))\\												
			&= p_{k_m-1}s_{m-1}(k_m+1, n)p_{k_m}\\								
			&\le p_{k_m-1}p_{k_m}															
		\end{alignat*}
		Therefore, it holds that $$0 \le \varepsilon \le p_{k^\prime(n)}p_{k^\prime(n)+1}.$$
		
		For the second part of the theorem, we consider the Karamata-Stirling profile, where the success probabilities are given by $p_k = \theta / (\theta+k-1)$ for $k \ge 1$. These probabilities are decreasing and, therefore, satisfy the assumptions of the theorem. As $n \to \infty$, $k^\prime(n) \to \infty$ and both $p_{k^\prime(n)}=O(n^{-1})$ and $p_{k^\prime(n)+1}=O(n^{-1})$. Consequently, $\varepsilon = O(n^{-2}) \to 0$, and the threshold policy is asymptotically optimal, which is precisely the second assertion of the theorem.
	\end{proof}
	
	\section{Asymptotics}
	
	By exploiting the convergence of the Poisson-binomial distribution to the Poisson distribution, we derive an asymptotically optimal threshold strategy. Poisson approximation applied to the Poisson-binomial distribution case  has been covered extensively in the literature \cite{ BarbourHall, DeheuvelsPfeifer, LeCam, Wang}. For the specific case of the Karamata-Stirling profile, the approximation appeared in many sources. For instance, the work of Arratia et al.  \cite{Arratia} and a more recent publication by Yamato \cite{Yamato}, along with the references therein, provide detailed discussions on this topic.
	
	A sequence $(X_n)$ of random variables taking values in $\mz_+$ converges in distribution to $\mu$ if and only if 
	$\dtv (\mu_n, \mu) \to 0$, where $\mu_n$ is the law of $X_n$ \cite{Barbour}. Let $W_{n} = X_1 + \ldots + X_n$ be the number of successes in $n$ Bernoulli trials having success probabilities $p_1, \ldots, p_n$. For a positive real $\lambda$, let $Y_\lambda$ denote a Poisson-distributed random variable with mean parameter $0 < \lambda < \infty$. We represent this distribution by  $\Poisson(\lambda)$. We wish to approximate the distribution of $W_{n}$, denoted $\calAlph{L}(W_n)$, with that  of $Y_\lambda$. To this end, denote the mean of $W_n$ by $\lambda_{n} $ and the sum of the squares of  probabilities by $\lambda^{(2)}_{n}$. These two quantities are given explicitly for the Karamata-Stirling profile (\ref{profile}) by $	\lambda_{n, 1} $ and $	\lambda^{(2)}_{n, 1} $ respectively, where
	\begin{equation*}
		\begin{dcases}
			\lambda_{n, k} = \sum_{j=k}^n \frac{\theta}{\theta + j- 1} , \\
			\lambda^{(2)}_{n, k} = \sum_{j=k}^n \left(\frac{\theta}{\theta + j- 1} \right)^2.
		\end{dcases}
	\end{equation*}
	For the total variation distance between $\calAlph{L}(W_{n})$ and  $\Poisson(\lambda)$, the earliest result is due to Le Cam \cite{Barbour, LeCam}, who proved that 
	$$\dtv \left(\lawof{W_n}, \Poisson(Y) \right) \le \lambda^{(2)}_{n, k}.$$
	
	Where $\dtv$ is the the total variation distance defined for every measures $\mu_1, \mu_2$  in some set of probability measures on $\mz_+$ by
	\begin{alignat*}{2}
		\dtv(\mu_1, \mu_2) = \sup \{ \abs{\mu_1(A) - \mu_2(A)} : A \in \mz_+\} = \frac{1}{2} \sum_{j \ge 0}   \abs{\mu_1(j) - \mu_2(j)}.
	\end{alignat*}

	Barbour and Hall (1984) give lower  and upper bounds for the total variation distance as follows
	\begin{equation*}
		\frac{1}{32} \min\left( 1, \frac{1}{\lambda_{n,k}}\right) \lambda^{(2)}_{n, k} ~\le~ \dtv \left(\lawof{W_k}, \Poisson(Y) \right) ~\le~ \frac{1}{\lambda_{n,k}} \left( 1-e^{-\lambda_{n,k}}\right)\lambda^{(2)}_{n,k}.
	\end{equation*}
	
	A necessary and sufficient condition for the convergence in law of $W_n$ to  $\Poisson(Y)$ is clear from this inequality and is given by Wang (1993) as follows.
	\begin{assertion}[Theorem 2 in \cite{Wang}]\label{prop.wang}
		Consider a sequence of independent Bernoulli random variables with success probabilities $p_1, p_2, \ldots$ and let $W_n$ be the partial sum of the first $n$  terms. $W_n$ converges weakly to $\Poisson(\lambda)$, where $~0 < \lambda < \infty~$, if and only if 
		$\mE{W_n} \to \lambda$ and $\displaystyle\sum_{i \ge 1} p^2_i \to 0$ as $n \to \infty$.
	\end{assertion}
	The next result  is an adaptation of Proposition 3.1 in \cite{Yamato}, which provides conditions under which the number of successes converges to a Poisson distribution, symbolised $\Poisson(\cdot)$, but taking into account that a proportion of trials is to be skipped.
	
	\begin{lemma}\label{prop:poisson.approx}
		Consider the random variable $W_{k, n}$ that counts the number of successes  in  trials $X_k,  \ldots, X_n$  whose success probabilities are given by (\ref{profile}). $W_{n,k}$ converges in distribution to $\Poisson(\theta\log(1/\kappa))$  if and only if
		$$ \lambda_{n,k} \to \theta \log(1/\kappa) ~~~~\text{as} ~~~ n\to \infty,$$
		where $\displaystyle \kappa = \lim_{n \to \infty} \frac{k}{n}$ is  the proportion of trials to skip.
	\end{lemma}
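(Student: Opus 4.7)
The plan is to apply the Poisson-approximation machinery summarised above---Wang's criterion in Proposition~\ref{prop.wang} together with the Barbour-Hall bound---to the remaining Bernoulli variables $X_k, X_{k+1}, \ldots, X_n$. The key observation is that, for the Karamata-Stirling profile with $k/n \to \kappa > 0$, the sum of squared success probabilities automatically vanishes, so the biconditional reduces to the stated mean-convergence criterion.

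First I would bound the second-moment term $\lambda^{(2)}_{n,k}$ by an integral comparison. Since $j \mapsto (\theta + j - 1)^{-2}$ is positive and decreasing,
$$\lambda^{(2)}_{n,k} = \sum_{j=k}^{n} \frac{\theta^2}{(\theta+j-1)^2} \le \frac{\theta^2}{(\theta+k-1)^2} + \theta^2 \int_{k}^{n} \frac{dx}{(\theta+x-1)^2}.$$
Because $k/n \to \kappa > 0$ together with $n \to \infty$ forces $k \to \infty$, the right-hand side tends to zero.

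Next, the Barbour-Hall upper bound recalled above yields
$$\dtv\left(\lawof{W_{k,n}},\,\Poisson(\lambda_{n,k})\right) \le \frac{1-e^{-\lambda_{n,k}}}{\lambda_{n,k}}\,\lambda^{(2)}_{n,k} \le \lambda^{(2)}_{n,k} \to 0,$$
so $\lawof{W_{k,n}}$ and $\Poisson(\lambda_{n,k})$ are asymptotically indistinguishable in total variation. The equivalence in the lemma then drops out of the triangle inequality for $\dtv$ combined with the continuity of the Poisson family in its parameter: $\lawof{W_{k,n}} \to \Poisson(\theta\log(1/\kappa))$ if and only if $\Poisson(\lambda_{n,k}) \to \Poisson(\theta\log(1/\kappa))$, which in turn holds if and only if $\lambda_{n,k} \to \theta\log(1/\kappa)$.

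The main technical point, rather than a genuine obstacle, is that $k = k(n)$ depends on $n$, placing us in a triangular-array setting rather than Wang's single-sequence framework. This is precisely why I would lean on the non-asymptotic Barbour-Hall bound in place of invoking Proposition~\ref{prop.wang} directly; it delivers the same conclusion while cleanly accommodating the moving index.
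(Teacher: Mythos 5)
Your proof is correct, but it follows a genuinely different route from the paper's. The paper sandwiches both $\lambda_{n,k}$ and $\lambda^{(2)}_{n,k}$ between integrals, concludes $\lambda_{n,k}\to\theta\log(1/\kappa)$ and $\lambda^{(2)}_{n,k}\to 0$ directly from the Karamata--Stirling profile, and then invokes Wang's criterion (Proposition~\ref{prop.wang}) to obtain the Poisson limit; a remark notes Deheuvels--Pfeifer as an alternative. You instead only establish $\lambda^{(2)}_{n,k}\to 0$, feed it into the non-asymptotic Barbour--Hall (or Le Cam) bound to get $\dtv\bigl(\lawof{W_{k,n}},\Poisson(\lambda_{n,k})\bigr)\to 0$, and then transfer the question to the deterministic family $\Poisson(\lambda_{n,k})$ via the triangle inequality. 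What your route buys is twofold: it honestly proves the stated biconditional (the paper's argument, by computing $\lambda_{n,k}$'s limit outright, really only delivers the ``if'' direction, with the hypothesis automatically true when $\kappa>0$), and it cleanly handles the triangular-array nature of the problem, since Wang's theorem as quoted concerns partial sums $W_n=X_1+\cdots+X_n$ of a fixed sequence, whereas here the lower index $k=k(n)$ moves with $n$ --- a point you rightly flag. What the paper's route buys is the explicit limit $\lambda_{n,k}\to\theta\log(1/\kappa)$, which is what Theorem~\ref{thm.darroch.problem.poisson} actually uses downstream; your argument as written never identifies this limit, so to support the asymptotic rule you would still need the paper's first integral comparison. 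One small step you wave at rather than prove: the equivalence $\Poisson(\lambda_{n,k})\to\Poisson(\theta\log(1/\kappa))$ iff $\lambda_{n,k}\to\theta\log(1/\kappa)$ needs, for the ``only if'' part, an identifiability argument (e.g.\ convergence of the mass at $0$ gives $e^{-\lambda_{n,k}}\to e^{-\theta\log(1/\kappa)}$), and for the ``if'' part a quantitative bound such as $\dtv\bigl(\Poisson(\lambda),\Poisson(\mu)\bigr)\le\abs{\lambda-\mu}$; both are one-liners, so this is a presentational gap, not a mathematical one.
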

	\begin{proof}
		Recall the inequality for $j \ge 1$
		$$\theta\int_{j}^{j+1} \frac{\dx}{x+ \theta} < \frac{\theta}{j + \theta} < \theta\int_{j-1}^{j}  \frac{\dx}{x+ \theta}.$$	
		Sum these inequalities from $k-1$ to $n-1$ to obtain
		\begin{equation*}
			\theta\log\left(\frac{n+\theta}{k+\theta}\right) < \lambda_{n,k} < \theta\log\left(\frac{n+\theta-1}{k+\theta-1}\right).
		\end{equation*}
		Taking the limit as $n \to \infty,~  k\to \infty$ and letting $\kappa = \lim_{n \to \infty} k/n$ yields
		$$\lambda_{n,k}  \to \theta\log(1/\kappa).$$

		Similarly,  for $j \ge 1$
		$$\theta^2\int_{j}^{j+1} \frac{\dx}{(x+ \theta)^2} < \frac{\theta^2}{(j+ \theta)^2} < \theta^2\int_{j-1}^{j}  \frac{\dx}{(x+ \theta)^2}.$$	
		Summing these inequalities from $k-1$ to $n-1$, we get
		\begin{equation*}
			\theta^2 \left( \frac{1}{n+\theta+1}-  \frac{1}{k+\theta}\right) < \lambda^{(2)}_{n,k} < \theta^2 \left( \frac{1}{n+\theta-1} - \frac{1}{k+\theta-1}\right).
		\end{equation*}
		As $n \to \infty,~ k\to \infty$, we immediately get $\lambda^{(2)}_{n,k}  \to 0$. Therefore, by Proposition \ref{prop.wang}, $W_{n,k}$ converges in law to $\Poisson(\theta\log(1/\kappa)).$
		This concludes the proof.	
	\end{proof}		
	\begin{rem}
		Without appealing to Wang's result, we may use the well-known fact that if $\sum_{j \ge k} p_j \to \infty $ and $\max \{ p_{k}, p_{k+1}, \ldots, p_n\} \to 0$ as $n \to 0$, which is satisfied in our case as $k$ depends on $n$, then weak convergence is achieved. This is a direct result of Theorem 1.2 of Deheuvels and Pfeifer (1986) \cite{DeheuvelsPfeifer, Pfeifer}.
	\end{rem}
	Having established weak convergence to a Poisson distribution, we solve the problem of stopping on the $m$th last success, asymptotically, as follows. 
	
	\begin{thm}[Asymptotic Rule]\label{thm.darroch.problem.poisson}
		For the problem of  selecting  the $m$th last success in a sequence of $n$ independent Bernoulli trials with success profile given by (\ref{profile}), the threshold policy, which stops at the first success, if any, on or after index $k_{\rm poi}(n)$ given by
		\begin{equation*}
			k_{\rm poi}(n) \coloneqq \min\{k \in [n]:    \frac{k}{n} \ge \exp\left(-\frac{m}{\theta}\right) \} \vee 1, 
		\end{equation*}
		is asymptotically optimal.	 Moreover, the asymptotic winning probability is $$ \frac{m^m}{m!} \exp(-m).$$
	\end{thm}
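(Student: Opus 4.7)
The plan is to reduce the statement to the Poisson approximation of Lemma \ref{prop:poisson.approx} together with the $\varepsilon$-optimality of the mean rule established in Theorem \ref{thm.darroch.problem1}. The starting observation is that a threshold rule which stops at the first success at or after an index $k$ wins precisely when trials $k,k+1,\ldots,n$ contain exactly $m$ successes, so its winning probability equals $s_m(k,n) = \proba[W_{k,n}=m]$ in the notation of Section 4.

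\textbf{Step 1 (asymptotic threshold).} By construction, $k_{\rm poi}(n) = \lceil n\,e^{-m/\theta}\rceil$ for all $n$ large enough that the $\vee 1$ in the definition is inactive, so $k_{\rm poi}(n)/n \to \kappa_m := e^{-m/\theta}$ and in particular $k_{\rm poi}(n) \to \infty$.

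\textbf{Step 2 (Poisson limit of the winning probability).} Applying Lemma \ref{prop:poisson.approx} with $\kappa = \kappa_m$ gives $\lambda_{n,k_{\rm poi}(n)} \to \theta\log(1/\kappa_m) = m$ and the weak convergence $W_{k_{\rm poi}(n),n} \to \Poisson(m)$ in distribution. On the integer lattice this is equivalent to convergence in total variation, so pointwise probabilities converge and hence
$$s_m(k_{\rm poi}(n),n) \;=\; \proba[W_{k_{\rm poi}(n),n}=m] \;\longrightarrow\; \frac{m^m}{m!}\,e^{-m},$$
which is the claimed asymptotic value.

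\textbf{Step 3 (asymptotic optimality).} The mean-rule threshold $k^\prime(n)$ of Theorem \ref{thm.darroch.problem1} satisfies $\lambda_{n,k^\prime(n)} \ge m > \lambda_{n,k^\prime(n)+1}$, and the explicit bounds $\theta\log((n+\theta)/(k+\theta)) < \lambda_{n,k} < \theta\log((n+\theta-1)/(k+\theta-1))$ derived in the proof of Lemma \ref{prop:poisson.approx} force $k^\prime(n)/n \to \kappa_m$ as well. Consequently the same Poisson argument yields $s_m(k^\prime(n),n) \to m^m e^{-m}/m!$. Since Theorem \ref{thm.darroch.problem1} shows the mean rule is $O(n^{-2})$-optimal for the Karamata--Stirling profile, the optimal winning probability $s_m(k_m,n)$ has the same limit, and therefore the Poisson threshold rule is asymptotically optimal.

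\textbf{Main obstacle.} There is no serious difficulty: the theorem is essentially a repackaging of the Poisson approximation on top of Theorem \ref{thm.darroch.problem1}. The only points requiring a moment of care are checking that $k_{\rm poi}(n) \to \infty$ so that Lemma \ref{prop:poisson.approx} applies (immediate since $\kappa_m > 0$), and noting that weak convergence to $\Poisson(m)$ on $\mathbb{Z}_+$ implies convergence of the mass at the specific point $m$; beyond that the argument is clean bookkeeping.
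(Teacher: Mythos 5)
Your proposal is correct, and its core (Steps 1--2) is the same as the paper's: apply Lemma \ref{prop:poisson.approx} with $\kappa=e^{-m/\theta}$, note that weak convergence on $\mathbb{Z}_+$ gives convergence of the point mass at $m$, and read off the limit $m^m e^{-m}/m!$ together with the threshold proportion from $\theta\log(n/k)=m$. Where you genuinely diverge is in how asymptotic optimality is certified. The paper argues directly that the optimal winning probability is $\sup_k \proba(W_{n,k}=m)$ and that this tends to $\sup_{\lambda>0}\lambda^m e^{-\lambda}/m!$, attained at $\lambda=m$ --- an interchange of the supremum over thresholds with the limit $n\to\infty$ that is left implicit; the payoff is that this simultaneously explains why $\lambda=m$ is the right tuning and yields the threshold in one stroke. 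You instead anchor optimality externally: you show the mean-rule threshold $k'(n)$ also has proportion tending to $e^{-m/\theta}$ (via the sandwich bounds on $\lambda_{n,k}$ from the proof of Lemma \ref{prop:poisson.approx}), deduce $s_m(k'(n),n)\to m^m e^{-m}/m!$, and then use the $O(n^{-2})$ bound of Theorem \ref{thm.darroch.problem1} to pin the optimal value $s_m(k_m,n)$ to the same limit. This route avoids the sup/limit interchange and is arguably tighter as a proof of optimality (one could equally have cited the analytical Theorem 2 for the limit of the optimal value), at the cost of leaning on Theorem \ref{thm.darroch.problem1}, whereas the paper's Section 4 argument is self-contained within the Poisson approximation.
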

	\begin{proof}
		Based on Proposition \ref{prop:poisson.approx},  the sum of probabilities $W_{n,k} = X_k + \cdots + X_n$ converges to a Poisson random variable $Y$ with mean $\lambda = \theta\log\left(\frac{n}{k}\right)$. 
		When $n/k \to \infty$, the total variation distance between $W_{n,k}$ and $Y$ tends to 0. Therefore, the asymptotic winning probability, which is $\sup \proba(W_{n,k} = m)$,  tends to $$\sup \{~\frac{\lambda^m} { m! }\exp(-\lambda)~|~ \lambda > 0\}.$$ This supremum can be easily achieved when $\lambda = m$, and hence, the asymptotic winning probability is $ m^m \exp(-m) / m!.$
		To get the optimal threshold, set $\lambda = m$ and solve for the proportion of trials $\displaystyle\frac{k}{n}$ to skip, where $k=1, 2, \ldots, n$. 
		From equation $\theta\log\left(\displaystyle\frac{n}{k}\right) = m$, a straightforward calculation yields
		\begin{alignat*}{2}
			\frac{k}{n} = \exp\left(-\frac{m}{\theta}\right).
		\end{alignat*}
		This completes the proof.
	\end{proof}	
	\section{Numerical results}
	We use the Karamata-Stirling profile to calculate optimal probabilities for two scenarios: the last success ($ s*_1 (n)$) and the second-to-last success ($ s*_2 (n)$) problems. 
	
	The trial numbers used for the simulation are listed as $n \in \{10, 100, 1000, 10000, 100000\}$. In the tables below, $m\in\{1,2\}$ indicates the number of successes, while  $\theta \in \{0.5, 1, 1.5, 2\}$ represents the mutation parameter of the Karamata-Stirling success profile.
	
	As benchmark values, recall that the asymptotic winning probability for the last success problem is $1/e$, and for the second-to-last success problem, it is $2e^{-2} \sim 0.270670$. The classical best choice problem aligns with the setting ($\theta=1$ and $m=1$), highlighted in grey in the first two tables. Finally, $\varepsilon$ is the discrepancy between the two stopping rules (Optimal - Suboptimal).

	\begin{table}[H]
		\begin{tabular}{c c >{\columncolor{Gray}}c c c c  >{\columncolor{Gray}}c c c}
			\toprule
			\multirow{3}{*}[-2pt]{n} &\multicolumn{4}{c}{$\varepsilon$} & \multicolumn{4}{c}{$s^*_1(n)$} 
			\\\cmidrule(lr){2-5}\cmidrule(lr){6-9}
			&\multicolumn{4}{c}{$\theta$}&\multicolumn{4}{c}{$\theta$}
			\\\cmidrule(lr){2-5}\cmidrule(lr){6-9}	
			&0.5 & 1 & 1.5 & 2 & 	0.5 & 1 & 1.5 & 2 	 \\
			\hline
			\midrule
			10 & 0.000000 & 0.000000 & 0.011817 & 0.009957 & 0.401393 & 0.398690 & 0.409131 & 0.416667 \\
			100 & 0.000000 & 0.000028 & 0.000184 & 0.000000 & 0.370812 & 0.371043 & 0.371823 & 0.372649 \\
			1000 & 0.000000 & 0.000001 & 0.000002 & 0.000003 & 0.368174 & 0.368196 & 0.368272 & 0.368357 \\
			10000 & 0.000000 & 0.000000 & 0.000000 & 0.000000 & 0.367909 & 0.367911 & 0.367919 & 0.367927 \\
			100000 & 0.000000 & 0.000000 & 0.000000 & 0.000000 & 0.367882 & 0.367883 & 0.367883 & 0.367884 \\
			\bottomrule
		\end{tabular}
		\label{m1_table}
		\caption{Performance of the mean rule versus the odds rule for $m=2$.}
	\end{table}
	
	In the next table,  thresholds generated by the odds rule and by the mean rule are compared. Discrepancies between the thresholds, optimal versus suboptimal, are highlighted in red.
	
	\begin{table}[H]
		\centering
		\begin{tabular}{c l >{\columncolor{Gray}}l l l l >{\columncolor{Gray}}l l l}
			\toprule
			\multirow{3}{*}[-2pt]{n} &\multicolumn{4}{c}{Odds} & \multicolumn{4}{c}{Probabilities} 
			\\\cmidrule(lr){2-5}\cmidrule(lr){6-9}
			&\multicolumn{4}{c}{$\theta$}&\multicolumn{4}{c}{$\theta$}
			\\\cmidrule(lr){2-5}\cmidrule(lr){6-9}	
			&0.5 & 1 & 1.5 & 2 & 	0.5 & 1 & 1.5 & 2 	 \\
			\hline
			\midrule
			10 & 2 & 4 & 6 & 7 & 2 & 4 & 5 &  \color{red}6 \\
			100 & 14 & 38 & 52 & 61 & 14 &  \color{red}37 &  \color{red}51 & 61 \\
			1000 & 136 & 369 & 514 & 607 & 136 &  \color{red}368 &  \color{red}513 & \color{red}606 \\
			10000 & 1354 & 3680 & 5135 & 6066 & 1354 &  \color{red}3679 &  \color{red}5134 &  \color{red}6065 \\
			100000 & 13534 & 36789 & 51342 & 60654 & 13534 &  \color{red}36788 & 51342 &  \color{red}60653 \\
			\bottomrule
		\end{tabular}
		\label{m1_table_thresh}
		\caption{Thresholds of the mean and odds rules for $m=1$.}
	\end{table}
	
	The next two tables detail the results for the second-to-last success problem ($m=2$).
	\begin{table}[H]
		\begin{tabular}{c c c c c c c c c}
			\toprule
			\multirow{3}{*}[-2pt]{n} &\multicolumn{4}{c}{$\varepsilon$} & \multicolumn{4}{c}{$s^*_2(n)$} 
			\\\cmidrule(lr){2-5}\cmidrule(lr){6-9}
			&\multicolumn{4}{c}{$\theta$}&\multicolumn{4}{c}{$\theta$}
			\\\cmidrule(lr){2-5}\cmidrule(lr){6-9}	
			&0.5 & 1 & 1.5 & 2 & 	0.5 & 1 & 1.5 & 2 	 \\
			\hline
			\midrule
			10 & 0.000000 & 0.323165 & 0.025088 & 0.016082 & 0.000000 & 0.323165 & 0.319856 & 0.322121 \\
			100 & 0.000588 & 0.000000 & 0.000077 & 0.000191 & 0.279770 & 0.275097 & 0.274961 & 0.275371 \\
			1000 & 0.000000 & 0.000002 & 0.000002 & 0.000002 & 0.271589 & 0.271104 & 0.271096 & 0.271136 \\
			10000 & 0.000000 & 0.000000 & 0.000000 & 0.000000 & 0.270761 & 0.270714 & 0.270713 & 0.270717 \\
			100000 & 0.000000 & 0.000000 & 0.000000 & 0.000000 & 0.270680 & 0.270675 & 0.270675 & 0.270675 \\
			\bottomrule
		\end{tabular}
		\label{m2_table}
		\caption{Performance of the mean rule versus the odds rule for $m=2$.}
	\end{table}
	\begin{table}[H]
		\centering
		\begin{tabular}{c l l l l l l l l}
			\toprule
			\multirow{3}{*}[-2pt]{n} &\multicolumn{4}{c}{Odds} & \multicolumn{4}{c}{Probabilities} 
			\\\cmidrule(lr){2-5}\cmidrule(lr){6-9}
			&\multicolumn{4}{c}{$\theta$}&\multicolumn{4}{c}{$\theta$}
			\\\cmidrule(lr){2-5}\cmidrule(lr){6-9}	
			&0.5 & 1 & 1.5 & 2 & 	0.5 & 1 & 1.5 & 2 	 \\
			\hline
			\midrule
			10 & 1 & 2 & 3 & 4 & 1 &  \color{red}1 &  \color{red}2 &  \color{red}3 \\
			100 & 3 & 14 & 27 & 37 &  \color{red}2 & 14 &  \color{red}26 &  \color{red}36 \\
			1000 & 19 & 136 & 264 & 368 & 19 &  \color{red}135 &  \color{red}263 &  \color{red}367 \\
			10000 & 184 & 1354 & 2636 & 3679 & 184 &  \color{red}1353 & 2636 &  \color{red}3678 \\
			100000 & 1832 & 13534 & 26360 & 36788 & 1832 & 13534 &  \color{red}26359 &  \color{red}36787 \\
			\bottomrule
		\end{tabular}
		\label{m2_table_thresh}
		\caption{Thresholds of the mean and odds rules for $m=2$.}
	\end{table}
	
	\section*{Acknowledgment}	
I would like to thank Sasha Gnedin for his insightful suggestions and for introducing me to Samuels' work. I am also grateful to Ross Pinsky for identifying errors in an earlier draft of this article.
	\printbibliography
	
\end{document}